\theoremstyle{plain}
\newtheorem{lem}{Lemma}[section]
\newtheorem{thm}[lem]{Theorem}
\newtheorem{cor}[lem]{Corollary}
\theoremstyle{definition}
\newtheorem{defn}{Definition}[section]
\theoremstyle{remark}
\newtheorem{rem}{Remark}[section]
\newtheorem{exm}{Example}[section]
\begin{document}

\title{ \Large\bf Shape reconstructions by using plasmon resonances}

\author{
Ming-Hui Ding\thanks{School of Mathematics, Hunan University, Changsha 410082,  China.\ \  Email: minghuiding@hnu.edu.cn}
\and
Hongyu Liu\thanks{Department of Mathematics, City University of Hong Kong, Kowloon, Hong Kong, China.\ \ Email: hongyu.liuip@gmail.com; hongyliu@cityu.edu.hk}
\and
Guang-Hui Zheng\thanks{School of Mathematics, Hunan University, Changsha 410082, China.\ \ Email: zhenggh2012@hnu.edu.cn}
}

\date{}
\maketitle

\begin{center}{\bf\large Abstract}
\end{center}\smallskip

We study the shape reconstruction of an inclusion from the {faraway} measurement of the associated electric field. This is an inverse problem of practical importance in biomedical imaging and is known to be notoriously ill-posed. By incorporating Drude's model of the permittivity parameter, we propose a novel reconstruction scheme by using the plasmon resonance with a significantly enhanced resonant field. We conduct a delicate sensitivity analysis to establish a sharp relationship between the sensitivity of the reconstruction and the plasmon resonance. It is shown that when plasmon resonance occurs, the sensitivity functional blows up and hence ensures a more robust and effective construction. Then we combine the Tikhonov regularization with the Laplace approximation to solve the inverse problem, which is an organic hybridization of the deterministic and stochastic methods and can quickly calculate the minimizer while capture the uncertainty of the solution. We conduct extensive numerical experiments to illustrate the promising features of the proposed reconstruction scheme.

\medskip
\noindent{\bf Keywords:} Shape reconstruction; plasmon resonance; sensitivity analysis; Tikhonov regularization; Laplace
approximation

\noindent{\bf 2010 Mathematics Subject Classification:}~~65N21, 35R30, 78A46, 65H50

\section{Introduction}
Plasmon resonance is the resonant oscillation of conduction electrons at the interface between negative and positive permittivity material stimulated by incident field. Plasmonics is revolutionizing many light-based technologies via electron oscillations in metals. We refer to  \cite{Jain2006, Anker2010, Raschke2003, Schultz2000, Nam2003, Baffou2010} and the references cited therein for many striking optical, phononic, biomedical, diagnostic and therapeutic applications in the physical literature. Recent studies have revealed the deep and intriguing connection between the plasmon resonance and the spectral study of the Neumann-Poincar\'e operator \cite{ACL,ACLS,ACKLM,DLZ, Mayergoyz2005, Grieser2014, Link2000, blasten2020}. In addition, there are many theoretical understandings and conceptual proposals about plasmonic devices.

 In \cite{Zhang2017}, by analyzing the imaginary part of the Green function, it is shown that one can achieve super-resolution and super-focusing by using plasmonic nanoparticles. In \cite{blasten2020,ACL,ACLS}, it is shown that the plasmon resonance concentrates and localises at high-curvature places, which can provide potential application in super-resolution imaging of plasmon particles.
 We would also like to mention in passing some related studies on plasmonic cloaking \cite{ACKLM,ando2016,DLZ, li2018,WN10,deng2020,li20181,li2019,LLL,bouchitte2010,Z}.
 In this paper, we study the shape reconstruction of an inclusion from the faraway measurement of the associated electric field. This is an inverse problem of practical importance in biomedical imaging and is known to be notoriously ill-posed. By incorporating Drude's model of the permittivity parameter, we propose a novel reconstruction scheme by using the plasmon resonance with a significantly enhanced resonant field.

 We next introduce the mathematical formulation of the inverse shape problem for our study. Let $D\subset\mathbb{R}^2$ be a bounded domain with a connected complement $\mathbb{R}^2\backslash\overline{D}$. Given a harmonic function $H$, we consider the following electrostatic problem:
 \begin{align}
\label{conducitivity pro}
\begin{cases}
&\nabla\cdot(\varepsilon\nabla u(x))=0\ \ \ \ \ \ \mathrm{in} \ \  \mathbb{R}^2,\\
&u(x)-H(x)=O(|x|^{-1})\ \ \mbox{as}\ \ |x|\rightarrow +\infty,
\end{cases}
\end{align}
where
\begin{align}
\label{conductivity}
\varepsilon=\varepsilon_{D}\chi(D)+\varepsilon_m\chi(\mathbb{{R}}^2\backslash \overline{{D}}),
\end{align}
and $\chi$ is the characteristic function. \eqref{conducitivity pro}-\eqref{conductivity} describes the transverse electromagnetic propagation in the quasi-static regime. $u$ signifies the transverse electric field and $\varepsilon$ signifies the permittivity parameter of the medium. Throughout, we shall assume that the background parameter $\varepsilon_m$ is a positive constant, whereas \textcolor[rgb]{0,0,0}{$\varepsilon_D$} is a complex-valued function of the illuminating frequency and fulfils the Drude's model. We shall supply more details about the Drude model in what follows. The shape reconstruction problem can be formulated as follows:

\emph{Inverse Problem (IP)}: Identify the shape of the inclusion, namely $\partial D$, from the measurement data $u^s=u-H$ on $\partial\Omega$ with $D\Subset\Omega$ associated with a fixed incident field $H$. For simplicity, we take $\Omega$ to be a central ball of radius $r\in\mathbb{R}_+$ with $r$ sufficiently large. Hence, the measurement represents the {far-field pattern} of the electric field.

The shape reconstruction problem introduced above is severely ill-posed and highly nonlinear. First, it is well known that due to the diffraction limit, the far field excited by the object carries information on a scale much larger than the operating wavelength, while information on a scale smaller than the operating wavelength is confined near the object itself. In addition, the scattering information in the quasi-static regime is very weak, and in the presence of measurement noise, the signal-to-noise ratio in the far field is low and signal distortion is serious \cite{ACL0}. We also refer to \cite{ACLS,ACLZ,H2,hintermuller2015,GLWZ,CDHLW,BL1,BL2,BL3,DCL,LT,LTY,CDLZ,YYL} for related studies in the literature on this inverse shape problem.

In this article, we first perform a shape sensitivity analysis and derive the shape sensitivity functional with respect to domain perturbation by a delicate asymptotic analysis. We establish the spectral expansion of the shape sensitivity functional, from which we can conclude the sharp relationship between the reconstruction sensitivity and the plasmon resonance. It indicates that the plasmon resonant field can render a more robust and effective reconstruction. Moreover, in order to overcome the ill-posedness, we combine the Tikhonov regularization method with the Laplace approximation (LA) to solve the inverse problem. This  hybrid method is essentially the organic combination of the deterministic method and stochastic method, {which can rapidly calculate the minimizer (Maximum a posteriori estimation point (MAP))} and capture statistical information of the solution more effectively. To provide a global view of our study, the major contributions of this work can be summarised as follows.

\begin{enumerate}
\item By using the layer-potential perturbation technique, we rigorously derive the asymptotic expansion of the perturbed far field with respect to the shape perturbation. Furthermore, we obtain the representation formula of the shape sensitivity functional.

\item Based on the spectral theory of the \textcolor[rgb]{0,0,0}{Neumann-poincar\'e operator}, we establish the delicate spectral expansion of the shape sensitivity functional. It indicates that when plasmon resonance occurs, the shape sensitivity can be improved dramatically.

\item \textcolor[rgb]{0,0,0}{Due to the severe ill-posedness of inverse problem, we use plasmon resonance to enhance the sensitivity, and then combine the Tikhonov regularization method with the Laplace approximation to solve the inverse problem and quantify the uncertainty of the solution.} Compared with the standard method, our numerical results show that the proposed method can significantly improve the accuracy and robustness of the numerical reconstruction.
\end{enumerate}

The rest of the paper is organized as follows. In Section 2, we provide preliminary knowledge on layer potential operators and plasmon resonance. In Section 3, we conduct the sensitivity analysis for the perturbed domain, and derive the spectral expansion of the shape sensitivity functional. In Section 4, we discuss the combination of the Tikhonov regularization method and the Laplace approximation. Sections 5 and 6 are respectively devoted to numerical experiments and conclusion.

\section{Preliminaries}
\subsection{Layer potentials and Neumann-Poincar\'e operator}

We collect a number of preliminary results on the layer potentials, in particular the Neumann-Poincar\'e operator for our subsequent use. Throughout this paper, we consider a domain $D$ with a $C^2$ boundary. The $L^2$ inner product and the corresponding norm on $\partial D$ are denoted by $\langle\cdot,\cdot\rangle$ and $\|\cdot\|$ in short, respectively. \textcolor[rgb]{0,0,0}{The single layer potential $\mathcal{S}_{D}$ and double layer potential $\mathcal{D}_{D}$  associated with $D$ are given by
\begin{align*}
\mathcal{S}_D[\varphi](x)&:=\int_{\partial D}\Gamma(x,y)\varphi(y)d\sigma(y),\ \ \ x\in \mathbb{R}^2,\\
\mathcal{D}_D[\varphi](x)&:=\int_{\partial D}\frac{\partial\Gamma(x,y)}{\partial \nu(y)}\varphi(y)d\sigma(y),\ \ \ x\in \mathbb{R}^2\setminus \partial D,
\end{align*}}
where $\varphi\in L^2(\partial D)$ is the density function, and the Green function $\Gamma(x,y)$ to the Laplacian in  $\mathbb{R}^2$ is given by
\begin{align*}
\Gamma(x,y)=\frac{1}{2\pi}\ln|x-y|.
\end{align*}
The notations $u|_\pm$ and $\frac{\partial u}{\partial \nu}|_\pm$ denote the traces on $\partial D$ from the outside and inside of $D$, respectively. \textcolor[rgb]{0,0,0}{The following jump relations hold \cite{ACLZ,ACKLM}}:
\begin{align*}
\frac{\partial\mathcal{S}_D[\varphi]}{\partial \nu}\bigg|_{\pm}(x)&=\left(\pm\frac{1}{2}I+\mathcal{K}^*_D\right)[\varphi](x), \ \ x\in \partial D,
\end{align*}
where $\mathcal{K}^*_{D}$ is known as the Neumann-Poincar\'e (NP) operator defined by
\begin{align*}
\mathcal{K}_D^*[\varphi](x)=\frac{1}{2\pi}\int_{\partial D}\frac{\langle x-y,\nu(x)\rangle}{| x-y|^2}\varphi(y)d\sigma(y).
\end{align*}

Next, we recall some useful facts about the NP operator $\mathcal{K}^*_{ D}$ \cite{ACKLM, DLZ, Mayergoyz2005}.

\begin{lem}
(i) The Calder$\acute{o}$n identity holds: $\mathcal{S}_{ D} \mathcal{K}_{ D}^*=\mathcal{K}_{ D}\mathcal{S}_{ D}$ on $H_0^{-\frac{1}{2}}$, where $H_0^{-\frac{1}{2}}$ is the zero mean subspace of $H^{-\frac{1}{2}}$;

(ii) The operator $\mathcal{K}_{ D}^*$ is compact and self-adjoint in the Hilbert space $H_0^{-\frac{1}{2}}$ equipped with the following inner product
\begin{align*}
\langle \varphi,\psi\rangle_{\mathcal{H}^*(\partial D)}=-\langle \mathcal{S}_{ D}[\psi],\varphi\rangle_{\frac{1}{2},-\frac{1}{2}},
\end{align*}
with $\langle\cdot,\cdot\rangle_{-\frac{1}{2},\frac{1}{2}}$ being the duality pairing between $H^{-1/2}(\partial D)$ and $H^{1/2}(\partial D)$

(iii) Let $\mathcal{H}^*(\partial D)$ be the space $H_0^{-\frac{1}{2}}(\partial D)$ with the new inner product in (ii). Let $(\lambda_j,\varphi_j), j=0,1,2,...$ be the eigenvalue and normalized eigenfunction pair of $\mathcal{K}_D^*$ in $\mathcal{H}^*(\partial D)$, then $\lambda_j\in(-\frac{1}{2},\frac{1}{2})$ and $\lambda_j\rightarrow 0$ as $j\rightarrow\infty$;

(iv) The following representation formula holds: for any $\varphi\in \mathcal{H}^{*}(\partial D)$,
\begin{align}
\label{spectral dec}
\mathcal{K}^*_{ D}[\varphi]=\sum_{j=1}^{\infty}\lambda_j\langle\varphi,\varphi_j\rangle_{\mathcal{H}^*(\partial D)}\varphi_j.
\end{align}
\end{lem}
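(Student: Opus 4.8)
The plan is to reduce all four assertions to the classical spectral theory of compact self-adjoint operators, with the Calder\'on identity in (i) serving as the linchpin that ties the single layer potential $\mathcal{S}_D$ to the NP operator $\mathcal{K}_D^*$. First I would prove (i) by invoking the symmetry of the fundamental solution $\Gamma(x,y)=\Gamma(y,x)$ together with Green's second identity. Writing out the kernels of both composite operators, the identity $\mathcal{S}_D\mathcal{K}_D^*=\mathcal{K}_D\mathcal{S}_D$ on $H_0^{-\frac{1}{2}}$ reduces to the pointwise relation
\begin{equation*}
\int_{\partial D}\Gamma(x,z)\frac{\partial\Gamma(z,y)}{\partial\nu(z)}\,d\sigma(z)=\int_{\partial D}\frac{\partial\Gamma(x,z)}{\partial\nu(z)}\Gamma(z,y)\,d\sigma(z),\qquad x,y\in\partial D,
\end{equation*}
which is precisely Plemelj's symmetrization principle and follows from Green's identity applied to the two harmonic functions $\Gamma(x,\cdot)$ and $\Gamma(\cdot,y)$ once their logarithmic singularities are excised.

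Next, to make sense of (ii) I would first verify that the bilinear form $\langle\varphi,\psi\rangle_{\mathcal{H}^*(\partial D)}=-\langle\mathcal{S}_D[\psi],\varphi\rangle$ is genuinely an inner product on $H_0^{-\frac{1}{2}}(\partial D)$. Symmetry is immediate from the $L^2$-symmetry of $\mathcal{S}_D$, while positive definiteness rests on the fact that $-\mathcal{S}_D$ is positive on the mean-zero subspace; I would establish this through the energy identity $-\langle\mathcal{S}_D[\varphi],\varphi\rangle=\int_{\mathbb{R}^2}|\nabla\mathcal{S}_D[\varphi]|^2\,dx$, being careful about the logarithmic growth at infinity in two dimensions, which is exactly why the restriction to $H_0^{-\frac{1}{2}}$ (together with an assumption on the logarithmic capacity) is indispensable. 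Self-adjointness of $\mathcal{K}_D^*$ in the new inner product then follows by a short computation: using that $\mathcal{K}_D$ is the $L^2$-adjoint of $\mathcal{K}_D^*$ and the Calder\'on identity from (i), one finds $\langle\mathcal{K}_D^*[\varphi],\psi\rangle_{\mathcal{H}^*}=-\langle\mathcal{K}_D\mathcal{S}_D[\psi],\varphi\rangle=-\langle\mathcal{S}_D\mathcal{K}_D^*[\psi],\varphi\rangle=\langle\varphi,\mathcal{K}_D^*[\psi]\rangle_{\mathcal{H}^*}$. For compactness I would exploit the $C^2$-regularity of $\partial D$: the kernel $\langle x-y,\nu(x)\rangle/|x-y|^2$ is then bounded, since the curvature cancels the apparent singularity, so $\mathcal{K}_D^*$ is Hilbert--Schmidt on $L^2(\partial D)$ and remains compact on the topologically equivalent space $\mathcal{H}^*(\partial D)$.

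With (i) and (ii) in hand, assertions (iii) and (iv) are largely consequences of the spectral theorem for compact self-adjoint operators. Compactness and self-adjointness immediately yield a discrete sequence of real eigenvalues $\lambda_j$ that can only accumulate at $0$, together with an orthonormal basis $\{\varphi_j\}$ of eigenfunctions; this delivers both $\lambda_j\to0$ and the representation formula \eqref{spectral dec} in (iv). The remaining point, the strict bound $\lambda_j\in(-\frac12,\frac12)$, I would obtain by excluding $\pm\frac12$ from the spectrum: the value $\frac12$ is the simple eigenvalue of $\mathcal{K}_D^*$ associated with the equilibrium density, which is removed as soon as we pass to the mean-zero subspace $H_0^{-\frac{1}{2}}$, while the endpoint estimates rest on the invertibility of $\pm\frac12 I-\mathcal{K}_D^*$, itself tied to the unique solvability of the interior and exterior Neumann problems via the jump relations for $\partial\mathcal{S}_D[\varphi]/\partial\nu$.

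I expect the principal obstacle to be the two-dimensional subtleties in part (ii): unlike in three dimensions, $-\mathcal{S}_D$ is not unconditionally positive, so the careful treatment of the logarithmic capacity and the verification that $\langle\cdot,\cdot\rangle_{\mathcal{H}^*}$ induces a norm equivalent to the $H^{-\frac{1}{2}}$ norm is the delicate step on which self-adjointness, compactness, and hence the whole spectral decomposition ultimately hinge.
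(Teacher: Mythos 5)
The paper does not actually prove this lemma: it is stated as a collection of known facts about the NP operator and attributed to \cite{ACKLM, DLZ, Mayergoyz2005}, so there is no internal proof to compare against. Your sketch reconstructs the standard argument from precisely that literature, and it is correct in its main lines: (i) is Plemelj's symmetrization principle, obtained by applying Green's second identity to the harmonic functions $\Gamma(x,\cdot)$ and $\Gamma(\cdot,y)$ with the singularities excised (the two half-disk contributions cancel, so the kernel identity holds in the principal-value sense); (ii) symmetrizes $\mathcal{K}_D^*$ with respect to $-\mathcal{S}_D$, and your three-line chain using the $L^2$-duality $\mathcal{K}_D=(\mathcal{K}_D^*)'$ plus the Calder\'on identity is exactly the standard computation; (iii)--(iv) then follow from the spectral theorem for compact self-adjoint operators.

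Two steps need tightening. First, your compactness argument has a genuine gap: Hilbert--Schmidt on $L^2(\partial D)$ gives compactness on $L^2(\partial D)$, but $\mathcal{H}^*(\partial D)$ is norm-equivalent to $H_0^{-1/2}(\partial D)$, \emph{not} to $L^2(\partial D)$, so the phrase ``remains compact on the topologically equivalent space'' does not close the argument. What rescues it is a smoothing estimate: for a $C^2$ (indeed $C^{1,\alpha}$) boundary the kernel is H\"older continuous, so $\mathcal{K}_D^*$ maps $H^{-1/2}(\partial D)$ boundedly into $H^{-1/2+\alpha}(\partial D)$, which embeds compactly back into $H^{-1/2}(\partial D)$, whence compactness on the equivalent space $\mathcal{H}^*(\partial D)$. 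Second, your appeal to a logarithmic-capacity assumption is unnecessary here: for $\varphi\in H_0^{-1/2}(\partial D)$ one has $\int_{\partial D}\varphi=0$, hence $\mathcal{S}_D[\varphi](x)=O(|x|^{-1})$ at infinity, so the energy identity applies directly and $-\mathcal{S}_D$ is positive definite on the mean-zero subspace without any capacity hypothesis; capacity only matters for positivity or invertibility of $-\mathcal{S}_D$ on all of $H^{-1/2}(\partial D)$, which the lemma avoids precisely by restricting to $H_0^{-1/2}$. Relatedly, for (iii) excluding the endpoints $\pm\frac12$ is not enough --- you must also rule out $|\lambda|>\frac12$; the clean way does both at once: for an eigenfunction $\varphi$ with $u=\mathcal{S}_D[\varphi]$, the jump relations give $\lambda=\frac{E_e-E_i}{2(E_e+E_i)}$ with $E_i=\int_D|\nabla u|^2\,dx$ and $E_e=\int_{\mathbb{R}^2\setminus\overline{D}}|\nabla u|^2\,dx$, so $|\lambda|\le\frac12$, and equality forces one energy to vanish, which uniqueness for the interior/exterior Neumann problems together with the decay at infinity excludes on the mean-zero subspace.
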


\subsection{Plasmon resonance}
We next briefly discuss the mathematical framework of plasmon resonance. We first give the form of the solution of equation ($\ref{conducitivity pro})$. From \cite{ACL0,ACKLM,DLZ}, we have
\begin{align*}
u(x)=H(x)+\mathcal{S}_D[\phi](x),\ \ \  \mathrm{for}\ x \in \mathbb{{R}}^2,
\end{align*}
where $\phi\in L_0^2(\partial D):=\{\phi\in L^2(\partial D); \int_{\partial D}\phi=0\}$ satisfies
\begin{align}
\label{density}
(\lambda I-\mathcal{K}^*_D)[\phi](x)=\frac{\partial H}{\partial \nu}\bigg{|}_{\partial D},\ \ x\in\partial D,
\end{align}
with $\lambda$ given by
\begin{align}
\label{lambda}
\lambda=\frac{\varepsilon_D+\varepsilon_m}{2(\varepsilon_D-\varepsilon_m)}.
\end{align}

The permittivities of plasmon materials, such as noble metals, are different from the ordinary materials and may possess negative real parts. In fact, the electric permittivity $\varepsilon_D$ of the plasmon material is changing with respect to the operating frequency $\omega$. The $\varepsilon_D$ can be described by the Drude's model (see \cite{Challener2008,ADP,fang2015}),
\begin{align}
\label{Drude}
\varepsilon_D=\varepsilon_D(\omega)=\varepsilon_0\left(1-\frac{\omega_p^2}{\omega(\omega+i\gamma)}\right),
\end{align}
where $\varepsilon_0$ is the electric permittivity of the vacuum and is given by $\varepsilon_0=9\cdot10^{-12}F/m$. The plasmon frequency $\omega_p$ and the damping parameter $\gamma$ are strictly positive. When $\omega<\omega_p$ the real part of $\varepsilon_D$ can be negative. In particular, for the gold, the value of the plasmon frequency and the damping parameter are $\omega_p=2\cdot10^{15}s^{-1}$ and $\gamma=10^{14}s$, respectively. Let the operating frequency $\omega=6\cdot10^{14}Hz$ (visible light frequency), and from Drude's model (\ref{Drude}), the electric permittivity of gold nanoparticle is calculated as $\varepsilon_D\approx(-9.8108 + 1.8018i)\varepsilon_0$.

Now by applying the spectral decomposition of the $\mathcal{K}^*_D$ to the integral equation $(\ref{density})$, the density $\phi\in L_0^2(\partial D)$ becomes
\begin{align}
\label{density1}
\phi=\sum_{j=1}^\infty\frac{\langle\frac{\partial H}{\partial \nu},\varphi_j\rangle_{\mathcal{H}^*(\partial D)}}{\lambda-\lambda_j}\varphi_j,
\end{align}
where $\lambda_j$ are eigenvalues of $\mathcal{K}^*_D$ and they satisfy $|\lambda_j|<\frac{1}{2}$.
When the real part of $\varepsilon_D(\omega)$ is negative, it holds that
$|\mathfrak{Re}(\lambda(\omega))<\frac{1}{2}|$, where and also in what follows $\mathfrak{Re}(\lambda(\omega))$ signifies the real part of $\lambda(\omega)$.
The frequency $\omega$ is called a plasmon resonance frequency if it satisfies
\begin{align}\label{sp-cond}
\mathfrak{Re}(\lambda(\omega))=\lambda_j,
\end{align}
for some $j$, where $\lambda_j$ is an eigenvalue of the NP operator $\mathcal{K}^*_D$. For this reason, \eqref{sp-cond} is called the plasmon resonance condition. It is emphasized that only when the imaginary part $\mathfrak{Im}(\lambda(\omega))=0$ (lossless metal), the resonance condition (\ref{sp-cond}) can imply the density $\phi$ in (\ref{density1}) blow up.

In (\ref{density1}) the density $\phi$ will be amplified when the plasmon resonance condition is reached
provided that $\langle\frac{\partial H}{\partial\nu},\varphi_j\rangle_{\mathcal{H}^*(\partial D)}$ is nonzero.  As a result, the \textcolor[rgb]{0,0,0}{ $j$-th mode} of the far field $u-H$ will show a resonant behavior, and it is called that plasmon resonance occurs (see also \cite{DLZ}). As an illustration, we consider the cylindrical metal nanorod as an example. When the external electric field lines are parallel to the circular cross-section of the cylindrical metal nanorod, the physical process can be described by the two-dimensional mathematical model (\ref{conducitivity pro})--(\ref{conductivity}). Next, we set $\varepsilon_m=\varepsilon_0$ (vacuum) and notice that, as $D$ is a disk, the spectrum of $\mathcal{K}_D^*$ are $\{0,\frac{1}{2}\}$. If $\lambda_j=0$, from the plasmon resonance condition (\ref{sp-cond}) and the fact that $\gamma$ is sufficiently small in (\ref{Drude}), we nearly get $\mathfrak{Re}(\varepsilon_D(\omega))=-\varepsilon_m$. This relationship is called the Fr\"{o}hlich condition (see \cite{nicholls2018}). Furthermore, by Drude's model (\ref{Drude}), we can obtain the resonance frequency $\omega=\sqrt{\frac{\omega_p^2}{2}-\gamma^2}$. When $\gamma=0$ (lossless metal), the resonance frequency $\omega=\frac{\omega_p}{\sqrt{2}}$ or $f=\frac{\omega_p}{2\pi\sqrt{2}}$, which is called the Fr\"{o}hlich frequency (see \cite{Challener2008}).

\section{Shape sensitivity analysis}
\subsection{Sensitivity analysis for the perturbed domain}
We consider the sensitivity analysis for the shape reconstruction problem with a perturbed domain, namely evaluating the effect
of the domain variations on the {far-field} measurement data.

Define the forward operator  $\mathcal{F}: X \rightarrow H^{\frac{1}{2}}(\partial \Omega)$ on a subset $X\subset C^2(\partial D) $:
\begin{align*}
\mathcal{F}(\partial D)=u^s|_{\partial \Omega},\ \ \ x \in X,
\end{align*}
where $u^s|_{\partial \Omega}=(u-H)|_{\partial \Omega}$ is the {far-field} measurement associated with \eqref{conducitivity pro} on the boundary $\partial \Omega$.

For a small $\epsilon\in\mathbb{R}_+$, we let $\partial D_\epsilon$ be an $\epsilon$ -perturbation of $D$, i.e.,
\begin{align*}
\partial D_\epsilon:=\{\tilde{x}=x+\epsilon h(x)\nu(x), x\in \partial D\},
\end{align*}
where $h\in C^1(\partial D)$, and $\nu$ is the outward unit normal vector to $\partial D$. The solution $u_\epsilon$ to ($\ref{conducitivity pro}$) with $D_\epsilon$ has the following representation formula
\begin{align}
\label{per solution}
u_\epsilon=H(x)+\mathcal{S}_{D_\epsilon}[\tilde{\phi}_\epsilon](x),\ \ x\in \mathbb{R}^2,
\end{align}
where the density function $\tilde{\phi}_\epsilon$ is the solution to
\begin{align}
\label{per density}
(\lambda I-\mathcal{K}_{D_\epsilon}^*)[\tilde{\phi}_\epsilon](\tilde{x})=\frac{\partial H}{\partial\tilde{\nu}}\bigg{|}_{\partial D_\epsilon}\ \ \ \tilde{x}\in \partial D_\epsilon.
\end{align}
Let $\Psi_\epsilon$ be the diffeomorphism from $\partial D$ to $\partial D_\epsilon$ given by
\[
\Psi_\epsilon(x)=x+\epsilon h(x)\nu(x),\ \ \ x\in \partial D.
\]
Moreover, we denote $\tilde{\nu}$ the outward unit normal vector to  $\partial D_\epsilon$ and $d\tilde{\sigma}$ the line element of $\partial D_\epsilon$. The following expansions of $\tilde{\nu}$ and $d\tilde{\sigma}$ hold \cite{Lim2012}:
\begin{align}
\label{Per nu}
\tilde{\nu}(\tilde{x})&=\nu(x)-\epsilon h'(x)T(x)+O(\epsilon^2),\\
\label{Per line}
 d\tilde{\sigma}(\tilde{x})&=d\sigma(x)-\epsilon\tau(x)h(x)d\sigma(x)+O(\epsilon^2).
\end{align}
Here and throughout the rest of the paper, $\tau(x)$ signifies the curvature of $\partial D$ at $x$, $T$ is the unit tangential vector to $\partial D$, and $h'$ is the tangential derivative of $h$ on $\partial D$, i.e., $h'=\frac{\partial h}{\partial T}$, where $\frac{\partial}{\partial\tilde{\nu}}$ denotes the outward normal derivative on $\partial D_\epsilon$.

In view of ($\ref{per solution}$) and (\ref{per density}), in order to obtain the asymptotic formula of the perturbed far field $u_\epsilon^s|_{\partial \Omega}:=\mathcal{F}(\partial D_\epsilon)$, we need to obtain the corresponding asymptotic expansions of the operators $\mathcal{K}^*_{D_{\epsilon}}$ and $ \tilde{\phi}_\epsilon$.  The following lemmas can be found in \cite{Lim2012}.
\begin{lem}
 For $\tilde{\phi}\in L^2(\partial D_\epsilon)$, let $\phi:=\tilde{\phi}\circ\Psi_\epsilon$. Then there exists a constant $C$ depending only on the $C^2$-norm of $\partial D$ and $\parallel h\parallel_{C^1}$ such that
\[
\parallel\big{(}\mathcal{K}_{ D_\epsilon}^{*}[\tilde{\phi}]\big{)}\circ\Psi_\epsilon-\mathcal{K}_{ D}^{*}[{\phi}]-\epsilon \mathcal{K}_{ D}^{(1)}[\phi]\parallel_{L^2(\partial D)}\leq C\epsilon^2\parallel \phi\parallel_{L^2(\partial D)},
\]
with the operator $\mathcal{K}_D^{(1)}$ defined for any $\phi\in L^2(\partial D)$ by
\begin{align}\label{k1}
\mathcal{K}_{D}^{(1)}[\phi](x)=\mathrm{p.v.}\int_{\partial D}K_1(x,y)\phi(y)d\sigma(y), \ \ \ x\in \partial D,
\end{align}
where
\begin{align*}
K_1(x,y)&=-2\frac{\langle x-y,\nu(x)\rangle\langle x-y, h(x)\nu(x)-h(y)\nu(y)\rangle}{|x-y|^4}\\
&+\frac{\langle h(x)\nu(x)-h(y)\nu(y),\nu(x) \rangle}{|x-y|^2}-\frac{\langle x-y,\tau(x)h(x)\nu(x)+h'(x)T(x) \rangle}{|x-y|^2}\\
&+\frac{\langle x-y,\nu(x) \rangle}{|x-y|^2}(h(x)\tau(x)-h(y)\nu(y)).\\
\end{align*}
Here, $\mathrm{p.v.}$ stands for the Cauchy principal value.
\end{lem}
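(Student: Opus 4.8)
The plan is to reduce everything to integrals over the fixed curve $\partial D$ by pulling back along the diffeomorphism $\Psi_\epsilon$, and then to Taylor-expand the resulting integrand in powers of $\epsilon$. Starting from the NP operator on the perturbed boundary,
\[
\mathcal{K}_{D_\epsilon}^*[\tilde\phi](\tilde x)=\frac{1}{2\pi}\int_{\partial D_\epsilon}\frac{\langle \tilde x-\tilde y,\tilde\nu(\tilde x)\rangle}{|\tilde x-\tilde y|^2}\,\tilde\phi(\tilde y)\,d\tilde\sigma(\tilde y),
\]
I would substitute $\tilde x=\Psi_\epsilon(x)$, $\tilde y=\Psi_\epsilon(y)$, use $\tilde\phi(\tilde y)=\phi(y)$, and invoke the expansions \eqref{Per nu} and \eqref{Per line} for $\tilde\nu$ and $d\tilde\sigma$. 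After this change of variables the whole expression becomes an integral over $\partial D$ whose kernel depends on $\epsilon$ through
\[
\tilde x-\tilde y=(x-y)+\epsilon\big(h(x)\nu(x)-h(y)\nu(y)\big),\qquad \tilde\nu(\tilde x)=\nu(x)-\epsilon h'(x)T(x)+O(\epsilon^2),
\]
together with the Jacobian factor $1-\epsilon\tau(y)h(y)+O(\epsilon^2)$ coming from $d\tilde\sigma$.

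Next I would expand each factor separately. Abbreviating $a=x-y$ and $b=h(x)\nu(x)-h(y)\nu(y)$, the numerator expands as $\langle a,\nu(x)\rangle+\epsilon\big(\langle b,\nu(x)\rangle-\langle a,h'(x)T(x)\rangle\big)+O(\epsilon^2)$, while the reciprocal denominator expands via $|a+\epsilon b|^{-2}=|a|^{-2}\big(1-2\epsilon\langle a,b\rangle/|a|^2+O(\epsilon^2)\big)$. Multiplying the numerator expansion, the denominator expansion and the Jacobian factor and collecting powers of $\epsilon$, the $O(1)$ term reproduces exactly the kernel $\tfrac{1}{2\pi}\langle a,\nu(x)\rangle/|a|^2$ of $\mathcal{K}_D^*$, and the three $O(\epsilon)$ contributions — one from the numerator, one from the Taylor expansion of $|a+\epsilon b|^{-2}$, and one from the line-element Jacobian — assemble precisely into the kernel $K_1(x,y)$ of \eqref{k1}. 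This is essentially a bookkeeping step, the only subtlety being to keep track of which geometric quantity ($h$, $h'$, $\tau$, $\nu$, $T$) is evaluated at $x$ versus $y$.

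The heart of the argument, and where I expect the real work, is the remainder estimate. The kernels are genuinely singular on the diagonal, and the whole analysis hinges on the cancellation afforded by the $C^2$ regularity of $\partial D$: because $\langle x-y,\nu(x)\rangle=O(|x-y|^2)$ as $y\to x$, the zeroth-order kernel is bounded, and each first-order kernel in $K_1$ is at worst weakly singular of order $|x-y|^{-1}$, so that $\mathcal{K}_D^{(1)}$ is a well-defined bounded operator on $L^2(\partial D)$ in the principal-value sense. For the $O(\epsilon^2)$ bound I would write the full $\epsilon$-dependent kernel with an exact (integral) Taylor remainder and estimate the second-order terms uniformly, so that the remainder factors as $R_\epsilon(x,y)=\epsilon^2\tilde R_\epsilon(x,y)$ with $|\tilde R_\epsilon(x,y)|$ dominated by $|x-y|^{-1}$ times a factor controlled by the $C^2$-norm of $\partial D$ and $\|h\|_{C^1}$. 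A Schur-type (generalized Young) estimate on $\partial D$ then converts this weakly singular bound into the claimed $L^2\to L^2$ estimate with a constant $C$ depending only on those two quantities.

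The main obstacle is therefore not the formal expansion but the uniform control of the singular integrals. Two points need genuine care: first, that for $\epsilon$ small $\Psi_\epsilon$ is bi-Lipschitz with constants close to $1$, giving a uniform lower bound $|\tilde x-\tilde y|\ge c|x-y|$ for $x,y\in\partial D$, so that the perturbation creates no new singularities and the denominators blow up at most at the unperturbed rate; and second, that every coefficient in the second-order remainder (including the $O(\epsilon^2)$ tails of \eqref{Per nu} and \eqref{Per line}) is bounded in terms of $\|h\|_{C^1}$ and the $C^2$-norm of $\partial D$. Establishing that the remainder is an $O(\epsilon^2)$ perturbation in operator norm, rather than merely pointwise, is the delicate part of the proof.
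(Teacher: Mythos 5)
The paper itself offers no proof of this lemma: it is imported verbatim from \cite{Lim2012} (Ammari--Kang--Lim--Zribi), so the benchmark is the argument in that reference, and your outline follows the same skeleton as that proof. Your algebraic bookkeeping is correct: with $a=x-y$ and $b=h(x)\nu(x)-h(y)\nu(y)$, the pullback identity $\tilde x-\tilde y=a+\epsilon b$ is exact, and collecting the three first-order contributions (numerator, expansion of $|a+\epsilon b|^{-2}$, Jacobian of the line element) yields the raw kernel $-2\langle a,\nu(x)\rangle\langle a,b\rangle/|a|^4+\langle b,\nu(x)\rangle/|a|^2-\langle a,h'(x)T(x)\rangle/|a|^2-\tau(y)h(y)\langle a,\nu(x)\rangle/|a|^2$; the displayed $K_1$ is this after adding and subtracting $\tau(x)h(x)\langle a,\nu(x)\rangle/|a|^2$ (so as to match the operator form \eqref{KD1}), and its last factor ``$h(x)\tau(x)-h(y)\nu(y)$'' is evidently a misprint for $h(x)\tau(x)-h(y)\tau(y)$, which your collected kernel reproduces correctly. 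Your two cautionary points --- the uniform bi-Lipschitz bound $|\tilde x-\tilde y|\ge c|x-y|$ for small $\epsilon$, and control of all $O(\epsilon^2)$ coefficients by $\|h\|_{C^1}$ and the $C^2$-norm of $\partial D$ --- are also the right ones.

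The genuine gap is in your final analytic step. On a one-dimensional curve, a kernel dominated pointwise by $|x-y|^{-1}$ is \emph{not} weakly singular: $\sup_x\int_{\partial D}|x-y|^{-1}\,d\sigma(y)$ diverges logarithmically, so no Schur or generalized Young test can close either the boundedness of $\mathcal{K}_D^{(1)}$ or the $O(\epsilon^2)$ remainder estimate. And such terms are genuinely present: in $K_1$ the piece $-h'(x)\langle a,T(x)\rangle/|a|^2$, and in the second-order remainder the cross term $2\langle a,h'(x)T(x)\rangle\langle a,b\rangle/|a|^4$, behave in arclength parametrization like $c(x)\,\mathrm{p.v.}\,(s-t)^{-1}$ --- odd Cauchy-type kernels whose $L^2$-boundedness rests precisely on the cancellation that an absolute-value majorant destroys. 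The proof in \cite{Lim2012} and its antecedents on interface perturbations handles this by recognizing every coefficient kernel in the $\epsilon$-expansion as a Calder\'on--Zygmund kernel of Cauchy-integral type on a Lipschitz curve, invoking the Coifman--McIntosh--Meyer theorem to obtain $L^2$-bounds growing at most geometrically in the expansion order, and summing the resulting series for $\epsilon$ small; for a $C^2$ boundary one can argue slightly more elementarily by splitting each such kernel into a smooth coefficient times the Hilbert kernel along arclength plus a bounded kernel, and using the $L^2$-boundedness of the Hilbert transform. Equivalently, one can rewrite the first-order and remainder operators through $\frac{\partial}{\partial T}\big(h\,\frac{\partial}{\partial T}\mathcal{S}_D[\cdot]\big)$ and $\frac{\partial}{\partial\nu}\mathcal{D}_D[h\,\cdot]$ as in \eqref{KD1}, whose $L^2$ mapping properties are classical. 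Without replacing your Schur step by such a singular-integral argument, the claimed $O(\epsilon^2)$ bound in operator norm is not established.
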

In fact, we can rewrite the operator $\mathcal{K}_{D}^{(1)}$  in terms of more familiar operators as follows,
\begin{align}
\small
\label{KD1}
\mathcal{K}_{D}^{(1)}[\phi]=-\frac{\partial}{\partial T}\bigg{(} h\frac{\partial \mathcal{S}_D[\phi]}{\partial T}\bigg{)}+\frac{\partial \mathcal{D}_D[h\phi]}{\partial \nu}+h\tau\mathcal{K}_{D}^*[\phi]-\mathcal{K}_D^*[h\tau \phi].
\end{align}

\begin{lem}\label{density asy}
 Let $\tilde{\phi}_\epsilon=(\lambda I-\mathcal{K}_{D_\epsilon}^*)^{-1}[\tilde{\nu}\cdot\nabla H]$, $\phi_\epsilon=\tilde{\phi}_\epsilon\circ\Psi_\epsilon$, and $\phi=(\lambda I-\mathcal{K}^*_{D})^{-1}[\nu\cdot\nabla H]$. Then we have
\begin{align*}
\parallel \phi_\epsilon-\phi-\epsilon\phi^{(1)}\parallel_{L^2(\partial D)}\leq C\epsilon^2\parallel \phi\parallel_{L^2(\partial D)},
\end{align*}
where $C$ is a constant depending only on the $C^2$-norm of $\partial D$ and $\parallel h\parallel_{C^1}$ and
\begin{align}
\label{asy density1}
\phi^{(1)}=(\lambda I-\mathcal{K}_D^*)^{-1}\bigg{(}h\langle(\nabla^2 H)\nu,\nu\rangle-h'\langle\nabla H,T\rangle+\mathcal{K}_D^{(1)} \phi\bigg{)}.
\end{align}
\end{lem}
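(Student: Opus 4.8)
The plan is to pull the integral equation \eqref{per density} back to the fixed boundary $\partial D$ through the diffeomorphism $\Psi_\epsilon$ and then expand each ingredient to first order in $\epsilon$. Composing the defining equation $(\lambda I - \mathcal{K}_{D_\epsilon}^*)[\tilde\phi_\epsilon] = \tilde\nu\cdot\nabla H$ with $\Psi_\epsilon$, the operator term is handled directly by the preceding lemma, which gives $(\mathcal{K}_{D_\epsilon}^*[\tilde\phi_\epsilon])\circ\Psi_\epsilon = \mathcal{K}_D^*[\phi_\epsilon] + \epsilon\mathcal{K}_D^{(1)}[\phi_\epsilon] + O(\epsilon^2\|\phi_\epsilon\|)$, where $\phi_\epsilon = \tilde\phi_\epsilon\circ\Psi_\epsilon$. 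It then remains to expand the right-hand side.

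For the right-hand side I would Taylor expand $\nabla H$ around $x$ along the normal direction. Using $\tilde x = x + \epsilon h(x)\nu(x)$ and the smoothness of the harmonic function $H$, one obtains $\nabla H(\tilde x) = \nabla H(x) + \epsilon h(x)(\nabla^2 H)(x)\nu(x) + O(\epsilon^2)$, and combining this with the normal-vector expansion \eqref{Per nu} and multiplying out, keeping terms up to first order, yields $(\tilde\nu\cdot\nabla H)\circ\Psi_\epsilon = \nu\cdot\nabla H + \epsilon\bigl(h\langle(\nabla^2 H)\nu,\nu\rangle - h'\langle\nabla H,T\rangle\bigr) + O(\epsilon^2)$. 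This already exhibits the two non-operator terms appearing in \eqref{asy density1}.

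Combining the two expansions, the pulled-back equation reads $(\lambda I - \mathcal{K}_D^*)[\phi_\epsilon] = \nu\cdot\nabla H + \epsilon\bigl(\mathcal{K}_D^{(1)}[\phi_\epsilon] + h\langle(\nabla^2 H)\nu,\nu\rangle - h'\langle\nabla H,T\rangle\bigr) + O(\epsilon^2)$. Subtracting the unperturbed equation $(\lambda I-\mathcal{K}_D^*)[\phi] = \nu\cdot\nabla H$ and applying $(\lambda I - \mathcal{K}_D^*)^{-1}$ gives $\phi_\epsilon - \phi = \epsilon(\lambda I-\mathcal{K}_D^*)^{-1}\bigl[\mathcal{K}_D^{(1)}[\phi_\epsilon] + h\langle(\nabla^2 H)\nu,\nu\rangle - h'\langle\nabla H,T\rangle\bigr] + O(\epsilon^2)$. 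A first pass shows $\|\phi_\epsilon - \phi\| = O(\epsilon\|\phi\|)$; replacing $\mathcal{K}_D^{(1)}[\phi_\epsilon]$ by $\mathcal{K}_D^{(1)}[\phi]$ at the cost of $\epsilon\cdot\mathcal{K}_D^{(1)}[\phi_\epsilon-\phi] = O(\epsilon^2)$ then reproduces exactly the stated $\phi^{(1)}$ together with the $O(\epsilon^2)$ bound.

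The main obstacle, and the step requiring the most care, is the uniform control of the inverse operator $(\lambda I - \mathcal{K}_{D_\epsilon}^*)^{-1}$ and the remainder. One must first ensure that $\lambda$ stays off the spectrum of $\mathcal{K}_{D_\epsilon}^*$ uniformly for small $\epsilon$; this follows from the operator convergence in the preceding lemma together with part (iii) of the first lemma (eigenvalues confined to $(-1/2,1/2)$ and accumulating only at $0$), so that $\|(\lambda I - \mathcal{K}_{D_\epsilon}^*)^{-1}\|$ is bounded independently of $\epsilon$. This uniform bound also yields the a priori estimate $\|\phi_\epsilon\| \le C\|\phi\|$ needed to convert the operator remainder $O(\epsilon^2\|\phi_\epsilon\|)$ into $O(\epsilon^2\|\phi\|)$. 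The remaining bookkeeping, namely confirming that every $O(\epsilon^2)$ term is genuinely bounded in $L^2(\partial D)$ by $C\epsilon^2\|\phi\|$ with $C$ depending only on the $C^2$-norm of $\partial D$ and $\|h\|_{C^1}$, is then routine, and closing the bootstrap completes the argument.
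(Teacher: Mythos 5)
Your proposal is correct and coincides with the paper's (implicit) proof: the paper states Lemma \ref{density asy} without proof, citing \cite{Lim2012}, and the argument there is exactly your pull-back-and-expand scheme --- compose \eqref{per density} with $\Psi_\epsilon$, apply the first-order expansion of $\mathcal{K}^*_{D_\epsilon}$ from the preceding lemma together with the right-hand-side expansion \eqref{per H/v}--\eqref{G1}, subtract the unperturbed equation \eqref{condtion equation1}, invert $\lambda I-\mathcal{K}_D^*$, and bootstrap to replace $\mathcal{K}_D^{(1)}[\phi_\epsilon]$ by $\mathcal{K}_D^{(1)}[\phi]$. Your added care about the uniform bound on $(\lambda I-\mathcal{K}_{D_\epsilon}^*)^{-1}$ is the right caveat to flag, since that constant degenerates as $\lambda$ approaches $\sigma(\mathcal{K}_D^*)$, a dependence the paper only tracks later in the remainder of Theorem \ref{spectral}.
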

Furthermore, for $\frac{\partial H}{\partial \tilde{\nu}}$ of (\ref{per density}), we can further obtain by using ($\ref{Per nu}$), that
\begin{align}
\label{per H/v}
\frac{\partial H}{\partial \tilde{\nu}}&=\nabla H(\Psi(x))\cdot \tilde{\nu}(\Psi(x))
=\frac{\partial H}{\partial \nu}(x)+\epsilon G^{(1)}(x)+O(\epsilon^2),\ \ \ x \in\partial D
\end{align}
where
\begin{align}
\label{G1}
G^{(1)}=h(x)\langle \nabla^2 H(x) \nu(x),\nu(x)\rangle-h'(x)\langle \nabla H(x), T(x)\rangle.
\end{align}

The asymptotic expansion of $\mathcal{F}(\partial D_\epsilon)$ under small perturbations of the boundary $\partial D$ in terms of the asymptotic parameter $\epsilon$ is given in the following theorem. We would like to point out that its proof is adapted from those in \cite{Feng2017, Lim2012}.
\begin{thm}\label{shape der thm}
Suppose that $\phi, \phi_d$ satisfy
\begin{align}
\label{condtion equation1}
(\lambda I-\mathcal{K}_{ D}^*)[\phi](y)&= \frac{\partial H}{\partial\nu}\bigg{|}_{\partial D},\\
\label{condtion equation2}
(\lambda I-\mathcal{K}_{D})[\phi_d](x,y)&=\Gamma(x,y),\ \ \ x\in\partial\Omega,\ y\in\partial D,
\end{align}
the following asymptotic expansion holds:
\begin{align*}
u_\epsilon^s(x)|_{\partial \Omega}-u^s(x)|_{\partial \Omega}=\epsilon\langle h(y),P(x,y)\rangle_{L^2(\partial D)}+O(\epsilon^2),\ \ \ x\in\partial\Omega,\ y\in\partial D,
\end{align*}
where
\begin{align}\label{p1}
P(x,y)=\frac{\partial \phi_d(x,y)}{\partial T(y)}\bigg{(}\frac{\partial(H(y)+\mathcal{S}_{ D}[\phi](y))}{\partial T(y)}\bigg{)}+\bigg{(}\frac{\partial(\mathcal{D}_{D}[\phi_d](x,y)+\Gamma(x,y))}{\partial \nu(y)}\bigg{)}\phi(y).
\end{align}
\end{thm}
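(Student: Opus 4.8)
The plan is to start from the representation $u_\epsilon^s|_{\partial\Omega}=\mathcal{S}_{D_\epsilon}[\tilde\phi_\epsilon]|_{\partial\Omega}$ and $u^s|_{\partial\Omega}=\mathcal{S}_D[\phi]|_{\partial\Omega}$, and to isolate the $O(\epsilon)$ contribution. First I would pull the boundary integral defining $\mathcal{S}_{D_\epsilon}[\tilde\phi_\epsilon](x)$ back to $\partial D$ through the diffeomorphism $\Psi_\epsilon$, writing it as $\int_{\partial D}\Gamma(x,\Psi_\epsilon(y))\,\phi_\epsilon(y)\,d\tilde\sigma(\Psi_\epsilon(y))$ with $\phi_\epsilon=\tilde\phi_\epsilon\circ\Psi_\epsilon$. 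Since $x\in\partial\Omega$ stays at a fixed positive distance from $\partial D$, the kernel is smooth in $y$, and a Taylor expansion gives $\Gamma(x,\Psi_\epsilon(y))=\Gamma(x,y)+\epsilon\,h(y)\,\frac{\partial\Gamma(x,y)}{\partial\nu(y)}+O(\epsilon^2)$. Combining this with the line-element expansion \eqref{Per line} and the density expansion $\phi_\epsilon=\phi+\epsilon\phi^{(1)}+O(\epsilon^2)$ from Lemma \ref{density asy}, and collecting the first-order coefficient, I obtain
\begin{align*}
u_\epsilon^s(x)-u^s(x)=\epsilon\int_{\partial D}\Big(h\,\frac{\partial\Gamma(x,y)}{\partial\nu(y)}\,\phi+\Gamma\,\phi^{(1)}-\Gamma\,\tau h\,\phi\Big)\,d\sigma(y)+O(\epsilon^2).
\end{align*}

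The key reduction is to eliminate the nonlocal term $\int_{\partial D}\Gamma(x,y)\phi^{(1)}(y)\,d\sigma(y)=\mathcal{S}_D[\phi^{(1)}](x)$, in which $\phi^{(1)}$ still carries the inverse operator $(\lambda I-\mathcal{K}_D^*)^{-1}$. Here I would bring in the dual density $\phi_d$ from \eqref{condtion equation2}: writing $\Gamma=(\lambda I-\mathcal{K}_D)[\phi_d]$, using that $\mathcal{K}_D$ and $\mathcal{K}_D^*$ are $L^2(\partial D)$-adjoints, and invoking the defining relation $(\lambda I-\mathcal{K}_D^*)[\phi^{(1)}]=G^{(1)}+\mathcal{K}_D^{(1)}[\phi]$ from \eqref{asy density1}, I can transfer the operator onto $\phi^{(1)}$ to get
\begin{align*}
\mathcal{S}_D[\phi^{(1)}](x)=\int_{\partial D}\phi_d(x,y)\big(G^{(1)}(y)+\mathcal{K}_D^{(1)}[\phi](y)\big)\,d\sigma(y),
\end{align*}
which replaces the hidden inverse by the explicit kernels \eqref{G1} and \eqref{KD1}.

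The remaining work is algebraic simplification. I would substitute \eqref{KD1} and \eqref{G1}, then integrate by parts in the tangential variable on the closed curve $\partial D$ (no boundary terms). Two structural facts drive the collapse. First, harmonicity of $H$ through the surface-Laplacian decomposition $\langle\nabla^2H\,\nu,\nu\rangle=-\partial_{TT}H+\tau\,\partial_\nu H$ (the sign being fixed by consistency with \eqref{Per nu}--\eqref{Per line}), which merges the Hessian part of $G^{(1)}$ with an integration-by-parts remainder into the tangential product $\frac{\partial\phi_d}{\partial T}\frac{\partial(H+\mathcal{S}_D[\phi])}{\partial T}$, i.e.\ the first term of \eqref{p1}. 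Second, the two integral equations \eqref{condtion equation1}--\eqref{condtion equation2}, used as $\partial_\nu H=\lambda\phi-\mathcal{K}_D^*[\phi]$ and $\mathcal{K}_D[\phi_d]=\lambda\phi_d-\Gamma$, which force every curvature-weighted term $h\tau(\cdots)$ arising from the subterms of $\mathcal{K}_D^{(1)}$, from the line element, and from the Hessian to cancel pairwise. Finally I would invoke the $L^2(\partial D)$-symmetry of the hypersingular operator $\varphi\mapsto\partial_\nu\mathcal{D}_D[\varphi]$ to rewrite $\int_{\partial D}\phi_d\,\partial_\nu\mathcal{D}_D[h\phi]\,d\sigma$ as $\int_{\partial D}\partial_\nu\mathcal{D}_D[\phi_d]\,(h\phi)\,d\sigma$, which together with the surviving $h\,\partial_\nu\Gamma\,\phi$ term yields the normal-derivative part of \eqref{p1}. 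Assembling the tangential and normal contributions gives $\epsilon\langle h,P(x,\cdot)\rangle_{L^2(\partial D)}$.

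The main obstacle is the bookkeeping in this last step: several curvature-weighted terms are produced (from $\mathcal{K}_D^{(1)}$, from $d\tilde\sigma$, and from $\langle\nabla^2H\,\nu,\nu\rangle$), and the argument hinges on their exact pairwise cancellation once the primal and dual equations are inserted, which in turn requires pinning down a single curvature sign convention consistently across \eqref{Per nu}, \eqref{Per line}, and the surface-Laplacian identity. A secondary technical point is justifying the symmetry of the hypersingular operator and the boundary-free integration by parts on $\partial D$, and checking that the $O(\epsilon^2)$ remainders from the three expansions combine into a genuine $O(\epsilon^2)$ error in the relevant operator norm, which follows from the quantitative estimates in Lemma \ref{density asy} and the preceding operator-expansion lemma.
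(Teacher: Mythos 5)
Your proposal is correct and follows essentially the same route as the paper's own proof of Theorem \ref{shape der thm}: the pullback of $\mathcal{S}_{D_\epsilon}[\tilde\phi_\epsilon]$ with the Taylor expansion of $\Gamma$, \eqref{Per line} and Lemma \ref{density asy}; the transfer of $(\lambda I-\mathcal{K}_D^*)$ onto $\phi^{(1)}$ via the dual density $\phi_d$ from \eqref{condtion equation2}; the split into the $G^{(1)}$ and $\mathcal{K}_D^{(1)}[\phi]$ contributions with tangential integration by parts and harmonicity of $H$ (your identity $\langle\nabla^2H\,\nu,\nu\rangle=-\partial_{TT}H+\tau\,\partial_\nu H$ is exactly the content of \eqref{ThH} combined with $\Delta H=0$); the symmetry of $\varphi\mapsto\partial_\nu\mathcal{D}_D[\varphi]$ (Proposition 4.1 of \cite{Lim2012}); and the final pairwise cancellation of the curvature-weighted terms through \eqref{condtion equation1}--\eqref{condtion equation2}. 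No gaps beyond the computational bookkeeping you already flag, which the paper resolves in precisely the way you outline.
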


\begin{proof} From the form of the solution $(\ref{per solution})$, it holds that
\[
\mathcal{F}(\partial D_\epsilon)=\int_{\partial D_\epsilon}\Gamma(x,\tilde{y})\tilde{\phi}_\epsilon(\tilde{y})d\tilde{\sigma}(\tilde{y}),\ x\in \partial\Omega,
\]
where $\tilde{\phi}_\epsilon$ is solution to (\ref{per density}), $d\tilde{\sigma}(\tilde{y})$ has an expansion as (\ref{Per line}), and $\Gamma(x,\tilde{y})$ has the Taylor expansion as follows
\[
\Gamma(x,y+\epsilon h(y)\nu(y))=\Gamma(x,y)+\epsilon h(y)\frac{\partial \Gamma(x,y)}{\partial\nu(y)}+O(\epsilon^2)\ \ \ y\in \partial D,\ x\in \partial \Omega.
\]
By virtue of Lemma \ref{density asy}, we can obtain that
\begin{align*}
\mathcal{F}(\partial D_\epsilon)-\mathcal{F}(\partial D)=&\epsilon\int_{\partial D}\Gamma(x,y) \phi^{(1)}(y)d\sigma(y)+\epsilon\int_{\partial D}\bigg{(}\frac{\partial \Gamma(x,y)}{\partial \nu(y)}-\tau(y) \Gamma(x,y)\bigg{)} \phi(y) h(y) d\sigma(y)\\
&+O(\epsilon^2).
\end{align*}
Next, we first calculate the term $\int_{\partial D}\Gamma(x,y) \phi^{(1)}(y)d\sigma(y)$. From $(\ref{asy density1})$ and (\ref{condtion equation2}) we have
\begin{align*}
\small
\int_{\partial D}\Gamma(x,y)\phi^{(1)}(y)d\sigma(y)=&\int_{ \partial D}(\lambda I-\mathcal{K}_{ D})\phi_d(x,y) \phi^{(1)}(y)d\sigma(y)\\
=&\int_{\partial D}\phi_d(x,y)(\lambda I-\mathcal{K}_{ D}^*) \phi^{(1)}(y)d\sigma(y)\\
=&\int_{\partial D}\phi_d(x,y)\bigg{(}h(y)\langle(\nabla^2 H(y))\nu(y),\nu(y)\rangle-h'(y)\frac{\partial H(y)}{\partial T(y)}\bigg{)}d\sigma(y)\\
&+\int_{\partial D}\phi_d(x,y)\mathcal{K}_{ D}^{(1)}[\phi](y)d\sigma(y)\\
=&:l_1+l_2.
\end{align*}
 We treat $l_1, l_2$ separately. Since
\begin{align}
\label{ThH}
\frac{\partial}{\partial T(y)}\bigg{(} \phi_d(x,y) \frac{\partial H(y)}{\partial  T(y)} \bigg{)}&=\frac{\partial \phi_d(x,y)}{\partial T(y)}\frac{\partial H(y)}{\partial T(y)}+\phi_d(x,y)\langle (\nabla^2 H(y))T(y),T(y)\rangle\\
&+\phi_d(x,y)\tau(y)\frac{\partial H(y)}{\partial \nu(y)}\nonumber,
\end{align}
and noting that $H(y)$ is a harmonic function and (\ref{ThH}), $l_1$ has the following form
\begin{align*}
\small
l_1=&\int_{\partial D}\phi_d(x,y) \bigg{(}h(y)\langle(\nabla^2 H(y))\nu(y),\nu(y)\rangle-h'(y)\langle\nabla H(y),T(y)\rangle\bigg{)}d\sigma(y)\\
=&\int_{\partial D}h(y)\bigg{(}\phi_d(x,y)\langle(\nabla^2 H(y))\nu(y),\nu(y)\rangle+\frac{\partial\phi_d(x,y)}{\partial T(y)}\frac{\partial H(y)}{\partial T(y)}\\&+\phi_d(x,y)\langle(\nabla^2 H(y))T(y),T(y)\rangle
+\phi_d(x,y)\tau(y)\frac{\partial H(y)}{\partial \nu(y)}\bigg{)} d\sigma(y)\\
=&\int_{\partial D}h(y)\bigg{(}\frac{\partial\phi_d(x,y)}{\partial T(y)}\frac{\partial H(y)}{\partial T(y)}+\phi_d(x,y)\Delta H(y)+\phi_d(x,y)\tau(y)\frac{\partial H(y)}{\partial \nu(y)}\bigg{)}d\sigma(y)\\
=&\int_{\partial D}h(y)\bigg{(}\frac{\partial\phi_d(x,y)}{\partial T(y)}\frac{\partial H(y)}{\partial T(y)}+\phi_d(x,y)\tau(y)\frac{\partial H(y)}{\partial \nu(y)}\bigg{)}d\sigma(y).
\end{align*}
Next, by ($\ref{KD1}$), we have
\allowdisplaybreaks
\begin{align}
l_2=&\int_{\partial D}\phi_d(x,y) \mathcal{K}_{D}^{(1)}[\phi](y) d\sigma(y)\nonumber\\
=&\int_{\partial D}\phi_d (x,y) \bigg{[}-\frac{\partial}{\partial T(y)}\bigg{(} h(y)\frac{\partial\mathcal{S}_D[\phi](y)}{\partial T(y)}\bigg{)}+\frac{\partial}{\partial \nu(y)}\mathcal{D}_{ D}[\phi h](y)+h(y)\tau(y)\mathcal{K}_{D}^*[\phi](y)\nonumber\\
&-\mathcal{K}_{ D}^*[h\tau\phi](y)\bigg{]}d\sigma(y)\nonumber\\
=&\int_{\partial D}h(y) \bigg{[}\frac{\partial\phi_d(x,y)}{\partial T(y)}\frac{\partial(\mathcal{S}_{D}\phi(y))}{\partial T(y)}+\frac{\partial}{\partial \nu(y)}\mathcal{D}_{ D}[\phi_d](x,y)\phi(y)+\phi_d(x,y)\tau(y)\mathcal{K}_{D}^*[\phi](y)\nonumber\\
&-\tau(y)\phi(y)\mathcal{K}_{ D}[\phi_d](x,y)\bigg{]}d\sigma(y),\nonumber
\end{align}
from which, together with the use of Proposition 4.1 in \cite{Lim2012}, we can further show that
\[
\int_{\partial D}\phi_d(x,y)\frac{\partial \mathcal{D}_D[h\phi](y)}{\partial \nu(y)}d\sigma(y)=\int_{\partial D}\frac{\partial \mathcal{D}_D[\phi_d](x,y)}{\partial \nu(y)}h(y)\phi(y) d\sigma(y).
\]
 Thus we obtain that
 \allowdisplaybreaks
\begin{align*}
\label{ 1}
\mathcal{F}(\partial D_\epsilon)-\mathcal{F}(\partial D) &=\int_{\partial D}h(y) \bigg{[}\frac{\partial \phi_d(x,y)}{\partial T(y)}\bigg{(}\frac{\partial(\mathcal{S}_{ D}[\phi])(y)}{\partial T(y)}\bigg{)}
+\frac{\partial(\mathcal{D}_{D}[\phi_d])(x,y)}{\partial \nu(y)}\phi(y)\\&+\tau(y) \phi_d(x,y)\mathcal{K}^*_{D}[\phi](y)-\tau(y)\mathcal{K}_D[\phi_d](x,y)\phi(y) \\
&+\phi_d(x,y)\tau(y)\frac{\partial H(y)}{\partial \nu(y)} +\frac{\partial \phi_d(x,y)}{\partial T(y)}\frac{\partial H(y)}{\partial T(y)}
\\&+\bigg{(}\frac{\partial \Gamma(x,y)}{\partial \nu(y)}-\tau(y) \Gamma(x,y)\bigg{)}\phi(y)\bigg{]}d\sigma(y)+O(\epsilon^2).
\end{align*}
From ($\ref{condtion equation1}$) and ($\ref{condtion equation2}$), we know that
\begin{align*}
\mathcal{K}^*_D[\phi](y)+\frac{\partial H(y)}{\partial \nu(y)}=\lambda\phi(y),\ \ \
\mathcal{K}_{D}[\phi_d](x,y)+\Gamma(x,y)=\lambda\phi_d(x,y).
\end{align*}
Thus  it follows that
\begin{align}
\mathcal{F}(\partial D_\epsilon)-\mathcal{F}(\partial D)&=\epsilon\int_{\partial D}h(y) \bigg{[}\frac{\partial \phi_d(x,y)}{\partial T(y)}\bigg{(}\frac{\partial(H(y)+\mathcal{S}_{ D}[\phi](y))}{\partial T(y)}\bigg{)}\nonumber\\
&+\bigg{(}\frac{\partial(\Gamma(x,y)+\mathcal{D}_{D}[\phi_d](x,y))}{\partial \nu(y)}\bigg{)}\phi(y)\bigg{]}d\sigma(y)+O(\epsilon^2)\nonumber,
\end{align}
which readily completes the proof.
\end{proof}

In order to investigate variations in the measurement resulting from variations in the shape of the underlying object, we
introduce the following definition of the shape sensitivity functional.
\begin{defn}\label{de1}
The shape sensitivity functional for the {far-field} measurement $u^s|_{\partial \Omega}$ with respect to the shape of $\partial D$ is defined as
\begin{align*}
SSF(\partial D):=\lim_{\epsilon\rightarrow0}\frac{u_\epsilon^s|_{\partial \Omega}-u^s|_{\partial \Omega}}{\epsilon}.
\end{align*}
\end{defn}

\begin{rem}
From Definition \ref{de1}, it is easy to see that the shape sensitivity function is actually the shape derivative of the forward operator $\mathcal{F}(\partial D)$ (cf. \cite{Lim2012}). Furthermore, by using Theorem \ref{shape der thm}, the shape sensitivity function can be rewritten as
\begin{align*}
SSF(\partial D)=\langle h,P\rangle_{L^2(\partial D)},
\end{align*}
where $P$ is defined in (\ref{p1}).
\end{rem}
\subsection{Shape sensitivity analysis and plasmon resonance}
In this subsection, we shall derive the spectral representation of the shape sensitivity functional in this subsection. It indicates that, when the plasmon resonance occurs, the shape sensitivity functional will be amplified and exhibits a large peak. Hence, the plasmon resonance can be used to significantly increase the sensitivity of the {far-field} measurement with respect to the shape of the underlying domain. {For simplicity, in the subsequent spectral analysis, we always exclude the essential spectrum $0$ from the spectrum set of NP operator and assume the eigenvalues is simple.}

First, we derive the asymptotic formulas for the eigenvalues and eigenfunctions of the NP operator with respect to the asymptotic parameter $\epsilon$. 

\begin{lem}\label{per eig}
Suppose that $\{\lambda_{j,\epsilon}, \varphi_{j,\epsilon}\},j=0,1,2,\cdots,$ are the perturbed eigenvalues and eigenfunctions of $\mathcal{K}^*_{D_\epsilon}$. Then we have as $\epsilon\rightarrow 0$ that
\begin{align*}
\lambda_{j,\epsilon}=\lambda_j+\epsilon \lambda_{j,\epsilon}^{(1)}+O(\epsilon^2),\quad \varphi_{j,\epsilon}=\varphi_j+\epsilon \varphi_{j,\epsilon}^{(1)}+O(\epsilon^2),
\end{align*}
where
\begin{align*}
\lambda_{j,\epsilon}^{(1)}=\langle \mathcal{K}_D^{(1)}\varphi_j,\varphi_j\rangle_{\mathcal{H}^*(\partial D)},\quad
\varphi_{j,\epsilon}^{(1)}=\sum_{j\neq l}\frac{\langle \mathcal{K}_D^{(1)}\varphi_j,\varphi_l\rangle_{\mathcal{H}^*(\partial D)}}{\lambda_j-\lambda_l}\varphi_l,
\end{align*}
and $\mathcal{K}_D^{(1)}$ is defined by (\ref{k1}) or (\ref{KD1}).
\end{lem}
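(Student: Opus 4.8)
The plan is to treat this as a first-order Rayleigh--Schrödinger perturbation problem for the self-adjoint compact operator $\mathcal{K}^*_D$ on the Hilbert space $\mathcal{H}^*(\partial D)$, working with the pullback of the eigenrelation to the fixed boundary $\partial D$. First I would transport the identity $\mathcal{K}^*_{D_\epsilon}[\varphi_{j,\epsilon}]=\lambda_{j,\epsilon}\varphi_{j,\epsilon}$ from $\partial D_\epsilon$ to $\partial D$ by composing with $\Psi_\epsilon$. Setting $\phi_{j,\epsilon}:=\varphi_{j,\epsilon}\circ\Psi_\epsilon$ and invoking the operator expansion established above, namely $(\mathcal{K}^*_{D_\epsilon}[\tilde\phi])\circ\Psi_\epsilon=\mathcal{K}^*_D[\phi]+\epsilon\,\mathcal{K}^{(1)}_D[\phi]+O(\epsilon^2)$, the eigenvalue equation becomes
\[
(\mathcal{K}^*_D+\epsilon\,\mathcal{K}^{(1)}_D)[\phi_{j,\epsilon}]=\lambda_{j,\epsilon}\,\phi_{j,\epsilon}+O(\epsilon^2)\quad\text{on }\partial D.
\]
Because the eigenvalues are assumed simple and the essential spectrum $0$ is excluded, each $\lambda_j$ is isolated, so analytic perturbation theory guarantees that $\lambda_{j,\epsilon}$ and a suitably normalized $\phi_{j,\epsilon}$ admit the asserted expansions in $\epsilon$; I would record this as the justification for inserting the ansatz $\lambda_{j,\epsilon}=\lambda_j+\epsilon\lambda^{(1)}_{j,\epsilon}+O(\epsilon^2)$ and $\varphi_{j,\epsilon}=\varphi_j+\epsilon\varphi^{(1)}_{j,\epsilon}+O(\epsilon^2)$.

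Next I would substitute the ansatz and collect powers of $\epsilon$. The $O(1)$ balance reproduces $\mathcal{K}^*_D[\varphi_j]=\lambda_j\varphi_j$, while the $O(\epsilon)$ balance yields the key inhomogeneous equation
\[
(\mathcal{K}^*_D-\lambda_j I)[\varphi^{(1)}_{j,\epsilon}]=\lambda^{(1)}_{j,\epsilon}\varphi_j-\mathcal{K}^{(1)}_D[\varphi_j].
\]
Pairing this with $\varphi_j$ in $\langle\cdot,\cdot\rangle_{\mathcal{H}^*(\partial D)}$ and using the self-adjointness of $\mathcal{K}^*_D$ (Lemma~(ii)) makes the left-hand side vanish, since $(\mathcal{K}^*_D-\lambda_j I)[\varphi_j]=0$; together with the normalization $\|\varphi_j\|_{\mathcal{H}^*(\partial D)}=1$ this gives $\lambda^{(1)}_{j,\epsilon}=\langle\mathcal{K}^{(1)}_D\varphi_j,\varphi_j\rangle_{\mathcal{H}^*(\partial D)}$. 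Pairing instead with $\varphi_l$ for $l\neq j$ and again moving $\mathcal{K}^*_D-\lambda_j I$ onto $\varphi_l$ produces $(\lambda_l-\lambda_j)\langle\varphi^{(1)}_{j,\epsilon},\varphi_l\rangle=-\langle\mathcal{K}^{(1)}_D\varphi_j,\varphi_l\rangle$, so the $\varphi_l$-coefficient of $\varphi^{(1)}_{j,\epsilon}$ equals $\langle\mathcal{K}^{(1)}_D\varphi_j,\varphi_l\rangle/(\lambda_j-\lambda_l)$. Expanding $\varphi^{(1)}_{j,\epsilon}$ in the orthonormal eigenbasis $\{\varphi_l\}$ furnished by \eqref{spectral dec} then yields the stated series; the component along $\varphi_j$ is undetermined by this equation and I would fix it to zero by the standard normalization $\langle\varphi_{j,\epsilon},\varphi_j\rangle=1+O(\epsilon^2)$.

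The main obstacle is not the formal computation but its rigorous justification, and it has two facets. First, the inner product $\langle\cdot,\cdot\rangle_{\mathcal{H}^*(\partial D_\epsilon)}$ itself depends on $\epsilon$ through $\mathcal{S}_{D_\epsilon}$, so the pulled-back operator is only approximately self-adjoint with respect to the fixed metric on $\mathcal{H}^*(\partial D)$; I would argue that the extraction above uses only the self-adjointness and orthonormality of the unperturbed data, whence the $\epsilon$-dependence of the metric enters at $O(\epsilon^2)$ and leaves the $l\neq j$ coefficients unaffected. Second, one must verify convergence of the series defining $\varphi^{(1)}_{j,\epsilon}$; here I would combine the spectral gap provided by simplicity with the accumulation $\lambda_l\to 0$ and the boundedness of $\mathcal{K}^{(1)}_D$ (read off from \eqref{KD1}), controlling the Bessel-type tail $\sum_{l\neq j}|\langle\mathcal{K}^{(1)}_D\varphi_j,\varphi_l\rangle|^2$ to ensure the expansion is meaningful in $\mathcal{H}^*(\partial D)$.
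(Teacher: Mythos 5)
Your proposal follows essentially the same route as the paper's proof: pull back the eigenrelation via $\Psi_\epsilon$, invoke the operator expansion of Lemma 3.1, insert the first-order ansatz, and extract $\lambda^{(1)}_{j,\epsilon}$ and the coefficients of $\varphi^{(1)}_{j,\epsilon}$ by pairing the $O(\epsilon)$ equation with $\varphi_j$ and $\varphi_l$ ($l\neq j$) using self-adjointness of $\mathcal{K}^*_D$ in $\mathcal{H}^*(\partial D)$. Your closing remarks on the $\epsilon$-dependence of the $\mathcal{H}^*$-metric, the normalization fixing the $\varphi_j$-component, and the convergence of the series are careful additions that the paper's (purely formal) proof does not address, but they refine rather than alter the argument.
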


\begin{proof} Since $\mathcal{K}_{D_\epsilon}^*[\varphi_{j,\epsilon}]=\lambda_{j,\epsilon}\varphi_{j,\epsilon}$, and by Lemma 3.1, we can obtain the following result
\[
(\mathcal{K}_{D}^*+\epsilon\mathcal{K}_{D}^{(1)}+O(\epsilon^2))(\varphi_j+\epsilon \varphi_{j,\epsilon}^{(1)}+O(\epsilon^2))=(\lambda_j+\epsilon \lambda_{j,\epsilon}^{(1)}+O(\epsilon^2))(\varphi_j+\epsilon \varphi_{j,\epsilon}^{(1)}+O(\epsilon^2)),
\]
which implies that
\begin{align}
\mathcal{K}_{D}^*\varphi_j&=\lambda_j\varphi_j\nonumber,\\
\label{eig pertured2}
\mathcal{K}_{D}^{(1)}\varphi_j+\mathcal{K}_{D}^*\varphi_{j,\epsilon}^{(1)}&=
\lambda_{j,\epsilon}^{(1)}\varphi_j+\lambda_j\varphi_{j,\epsilon}^{(1)}.
\end{align}
By inner producting $\varphi_l$  in both sides of ($\ref{eig pertured2}$), we further have
\begin{align*}
\langle\mathcal{K}_{D}^{(1)}\varphi_j,\varphi_l\rangle_{\mathcal{H}^*(\partial D)}+\langle \mathcal{K}_{D}^*\varphi_{j,\epsilon}^{(1)},\varphi_l\rangle_{\mathcal{H}^*(\partial D)}
=
\lambda_{j,\epsilon}^{(1)}\langle\varphi_j,\varphi_l\rangle_{\mathcal{H}^*(\partial D)}+\lambda_j\langle\varphi_{j,\epsilon}^{(1)},\varphi_l\rangle_{\mathcal{H}^*(\partial D)}.
\end{align*}
Since the operator $\mathcal{K}_D^*$ is self-adjoint, we have
\[
\langle \mathcal{K}_D^* \varphi_{j,\epsilon}^{(1)},\varphi_l\rangle_{\mathcal{H}^*(\partial D)}=\langle\varphi_{j,\epsilon}^{(1)},\mathcal{K}_D^*\varphi_l \rangle_{\mathcal{H}^*(\partial D)}=\lambda_l\langle \varphi_{j,\epsilon}^{(1)},\varphi_l\rangle_{\mathcal{H}^*(\partial D)},
\]
and thus
\[
\lambda_{j,\epsilon}^{(1)}=\langle \mathcal{K}_D^{(1)}\varphi_j,\varphi_j\rangle_{\mathcal{H}^*(\partial D)}.
\]
From ($\ref{eig pertured2}$), we have
\[
(\lambda_jI-\mathcal{K}_D^*)\varphi_{j,\epsilon}^{(1)}=\mathcal{K}_D^{(1)}\varphi_j-\lambda_{j,\epsilon}^{(1)}\varphi_j.
\]
Then it is straightforward to see that
\[
\varphi_{j,\epsilon}^{(1)}=\sum_{j\neq l}\frac{\langle \mathcal{K}_D^{(1)}\varphi_j,\varphi_l\rangle_{\mathcal{H}^*(\partial D)}}{\lambda_j-\lambda_l}\varphi_l.
\]

The proof is complete.
\end{proof}

{Let $\text{dist}(\lambda,\sigma(\mathcal{K}_D^*))$ be the distance of $\lambda$ to the spectrum set $\sigma(\mathcal{K}_D^*)$.} Next, we are ready to state our main result in the following theorem.
\begin{thm}\label{spectral}
As $\epsilon\rightarrow 0$, the perturbation $u^s_\epsilon|_{\partial\Omega}-u|_{\partial \Omega}$ has the following spectral expansion:
\begin{align*}
u^s_\epsilon(x)-u^s(x)=\epsilon \mathcal{T}(x)
{+O\left(\frac{\epsilon^2}{\mathrm{dist}(\lambda,\sigma(\mathcal{K}_D^*))^2}\right)},\ \ \ \ x\in\partial\Omega,
\end{align*}
with
\begin{align}
\label{ux}
\mathcal{T}(x)=&\sum_{j=1}^{\infty}\frac{\langle \frac{\partial H}{\partial \nu},\varphi_{j,\epsilon}^{(1)}\rangle+\langle G^{(1)},\varphi_j\rangle\mathcal{S}_D[\varphi_j](x)+\langle \frac{\partial H}{\partial \nu},\varphi_j\rangle \mathcal{Z}(x)}{\lambda-\lambda_j}\\&+
\sum_{j=1}^{\infty}\frac{\lambda_{j,\epsilon}^{(1)}\langle\frac{\partial H}{\partial \nu},\varphi_j\rangle\mathcal{S}_D[\varphi_j](x)}{(\lambda-\lambda_j)^2}\nonumber,
\end{align}
where $G^{(1)}$ is defined by (\ref{G1}), $\langle\cdot,\cdot\rangle$ signifies $\langle\cdot,\cdot\rangle_{\mathcal{H}^*(\partial D)}$, and
\begin{align*}
\small
\mathcal{Z}(x)=\int_{\partial D}
\left(\frac{\partial \Gamma(x,y)}{\partial \nu(y)}h(y)\varphi_j(y)+\Gamma(x,y)\varphi_{j,\epsilon}^{(1)}(y)-\tau(y) h(y)\Gamma(x,y)\varphi_j(y)\right)d\sigma(y).
\end{align*}
\end{thm}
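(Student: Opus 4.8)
The plan is to start from the two layer-potential representations, namely $u^s=\mathcal{S}_D[\phi]$ and, from \eqref{per solution}, $u^s_\epsilon=\mathcal{S}_{D_\epsilon}[\tilde\phi_\epsilon]$, both restricted to $\partial\Omega$, and to reduce everything to integrals over the \emph{fixed} boundary $\partial D$ by pulling back along the diffeomorphism $\Psi_\epsilon$. Writing $\phi_\epsilon=\tilde\phi_\epsilon\circ\Psi_\epsilon$ and changing variables, I would expand the kernel through the Taylor formula $\Gamma(x,\Psi_\epsilon(y))=\Gamma(x,y)+\epsilon h(y)\frac{\partial\Gamma(x,y)}{\partial\nu(y)}+O(\epsilon^2)$ together with the line-element expansion \eqref{Per line}. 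Since the evaluation point $x$ lies on $\partial\Omega$, well separated from $\partial D$, the kernel $\Gamma(x,\cdot)$ and its normal derivative are smooth and bounded on $\partial D$, so this Taylor expansion is legitimate with a uniformly controlled remainder.

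The key structural step is to feed in the spectral expansion of the pulled-back density. From the operator-expansion lemma (Lemma 3.1) and \eqref{per H/v}, the pulled-back density equation reads $(\lambda I-\mathcal{K}_D^*-\epsilon\mathcal{K}_D^{(1)})[\phi_\epsilon]=\frac{\partial H}{\partial\nu}+\epsilon G^{(1)}+O(\epsilon^2)$, with $G^{(1)}$ as in \eqref{G1}. I would then expand $\phi_\epsilon$ in the perturbed eigenbasis $\{\varphi_{j,\epsilon}\}$ of $\mathcal{K}_D^*+\epsilon\mathcal{K}_D^{(1)}$ supplied by Lemma~\ref{per eig}, giving $\phi_\epsilon=\sum_j\frac{\langle\frac{\partial H}{\partial\nu}+\epsilon G^{(1)},\varphi_{j,\epsilon}\rangle}{\lambda-\lambda_{j,\epsilon}}\varphi_{j,\epsilon}+O(\epsilon^2)$. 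Substituting $\lambda_{j,\epsilon}=\lambda_j+\epsilon\lambda_{j,\epsilon}^{(1)}$ and $\varphi_{j,\epsilon}=\varphi_j+\epsilon\varphi_{j,\epsilon}^{(1)}$, expanding the resolvent denominator as $\frac{1}{\lambda-\lambda_{j,\epsilon}}=\frac{1}{\lambda-\lambda_j}\big(1+\epsilon\frac{\lambda_{j,\epsilon}^{(1)}}{\lambda-\lambda_j}\big)+O(\epsilon^2)$, and collecting the order-$\epsilon$ contributions is what produces $\mathcal{T}(x)$.

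Concretely, I would organize the computation mode by mode. The perturbed far field of the $j$-th eigenmode, $\int_{\partial D_\epsilon}\Gamma(x,\tilde y)(\varphi_{j,\epsilon}\circ\Psi_\epsilon^{-1})\,d\tilde\sigma$, expands to $\mathcal{S}_D[\varphi_j](x)+\epsilon\,\mathcal{Z}(x)+O(\epsilon^2)$, which is exactly how the quantity $\mathcal{Z}$ in the statement arises: the $h\frac{\partial\Gamma}{\partial\nu}$ term, the $\varphi_{j,\epsilon}^{(1)}$ correction to the eigenfunction, and the $-\tau h\,\Gamma$ term from the line element. The perturbation of the numerator $\langle\frac{\partial H}{\partial\nu}+\epsilon G^{(1)},\varphi_{j,\epsilon}\rangle$ yields the terms $\langle G^{(1)},\varphi_j\rangle$ and $\langle\frac{\partial H}{\partial\nu},\varphi_{j,\epsilon}^{(1)}\rangle$, while the resolvent-denominator expansion yields the double-pole term with $\lambda_{j,\epsilon}^{(1)}/(\lambda-\lambda_j)^2$. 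Assembling these reproduces the two sums defining $\mathcal{T}(x)$, and the zeroth-order term collapses to $u^s(x)=\sum_j\frac{\langle\frac{\partial H}{\partial\nu},\varphi_j\rangle}{\lambda-\lambda_j}\mathcal{S}_D[\varphi_j](x)$, consistent with \eqref{density1}.

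I expect the main obstacle to be the remainder estimate $O\big(\epsilon^2/\mathrm{dist}(\lambda,\sigma(\mathcal{K}_D^*))^2\big)$ rather than the algebra. The delicate point is uniformity in the near-resonance regime, where $\lambda$ approaches an eigenvalue and $\mathrm{dist}(\lambda,\sigma(\mathcal{K}_D^*))$ is small: each application of the resolvent $(\lambda I-\mathcal{K}^*_{D_\epsilon})^{-1}$ contributes a factor comparable to $1/\mathrm{dist}(\lambda,\sigma(\mathcal{K}^*_{D_\epsilon}))$, so carrying the expansion to second order produces the $\mathrm{dist}^{-2}$ scaling. I would combine the per-mode $O(\epsilon^2)$ bounds of Lemma~\ref{density asy} and Lemma~\ref{per eig} (together with the operator-expansion lemma preceding them) with the self-adjoint functional calculus for $\mathcal{K}_D^*$ in $\mathcal{H}^*(\partial D)$ to bound the tail of the spectral sum and to control the resolvent uniformly, using that $\sigma(\mathcal{K}^*_{D_\epsilon})$ remains within an $O(\epsilon)$ neighbourhood of $\sigma(\mathcal{K}_D^*)$. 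This is precisely the estimate that quantifies the blow-up of the sensitivity functional at resonance.
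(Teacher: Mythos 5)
Your proposal follows essentially the same route as the paper: both expand $\mathcal{F}(\partial D_\epsilon)$ spectrally via the perturbed eigenpairs of Lemma~\ref{per eig}, expand the kernel, line element, normal derivative, and $\mathcal{S}_{D_\epsilon}[\varphi_{j,\epsilon}]$ to first order (producing $\mathcal{Z}$, $G^{(1)}$, and $\varphi_{j,\epsilon}^{(1)}$), and expand the resolvent denominator as a geometric series to extract the double-pole term $\lambda_{j,\epsilon}^{(1)}/(\lambda-\lambda_j)^2$, with the remainder tracked through the $\mathrm{dist}(\lambda,\sigma(\mathcal{K}_D^*))$-dependent resolvent factors. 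Your only deviation --- pulling everything back to $\partial D$ before invoking the spectral decomposition, rather than decomposing on $\partial D_\epsilon$ first --- is cosmetic, and your closing remarks on uniformity near resonance are, if anything, slightly more careful than the paper's own bookkeeping.
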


\begin{proof} According to ($\ref{spectral dec}$), $\mathcal{F}(\partial D)$ can be decomposed as follow
\begin{align*}
\mathcal{F}(\partial D)&=\int_{\partial D}\Gamma(x,y)(\lambda I-\mathcal{K}_{D}^*)^{-1}\bigg{[}\frac{\partial H}{\partial \nu}\bigg{]}(y) d\sigma(y)\\
&=\sum_{j=1}^{\infty} \frac{\langle \frac{\partial H}{\partial \nu},\varphi_j\rangle}{\lambda-\lambda_j}\int_{\partial D}\Gamma(x,y)\varphi_j(y)d\sigma(y)\\
&=\sum_{j=1}^{\infty} \frac{\langle \frac{\partial H}{\partial \nu},\varphi_j\rangle\mathcal{S}_D[\varphi_j](x)}{\lambda-\lambda_j}.
\end{align*}
In a similar manner, $\mathcal{F}(\partial D_\epsilon)$ can be decomposed as
\begin{align*}
\small
\mathcal{F}(\partial D_\epsilon)&=\int_{\partial D_\epsilon}\Gamma(x,\tilde{y})(\lambda I-\mathcal{K}_{D_\epsilon}^*)^{-1}\bigg{[}\frac{\partial H}{\partial \tilde{\nu}}\bigg{]}(\tilde{y}) d\tilde{\sigma}(\tilde{y})\\
&=\sum_{j=1}^{\infty} \frac{\langle \frac{\partial H}{\partial \tilde{\nu}},\varphi_{j,\epsilon}\rangle\mathcal{S}_{D_{\epsilon}}[\varphi_{j,\epsilon}](x)}{\lambda-\lambda_{j,\epsilon}}.
\end{align*}
Furthermore, using the perturbation of eigenvalues and eigenfunctions in Lemma \ref{per eig}, we can show that
\begin{align}
\small
\label{per Sd}
\mathcal{S}_{D_{\epsilon}}[\varphi_{j,\epsilon}](x)=\int_{\partial D}\Gamma(x,y)\varphi_j(y) d\sigma(y)+\epsilon \mathcal{Z}(x)+O(\epsilon^2),
\end{align}
where
\begin{align*}
\small
\mathcal{Z}(x)=\int_{\partial D}
\bigg{(}\frac{\partial \Gamma(x,y)}{\partial \nu(y)}h(y)\varphi_j(y)+\Gamma(x,y)\varphi_{j,\epsilon}^{(1)}(y)-\tau(y) h(y)\Gamma(x,y)\varphi_j(y)\bigg{)}d\sigma(y).
\end{align*}
From $(\ref{per H/v})$, $(\ref{per Sd})$ and Lemma \ref{per eig},  we deduce that

\allowdisplaybreaks
\begin{small}
\begin{align*}
\mathcal{F}(\partial D_\epsilon)=&\sum_{j=1}^{\infty} \frac{\langle \frac{\partial H}{\partial \tilde{\nu}},\varphi_{j,\epsilon}\rangle\mathcal{S}_{D_\epsilon}[\varphi_{j,\epsilon}](x)} {\lambda-\lambda_{j,\epsilon}}\\
=&\sum_{j=1}^{\infty}\frac{\langle \frac{\partial H}{\partial \nu}+\epsilon G^{(1)}+O(\epsilon^2), \varphi_j+\epsilon \varphi_{j,\epsilon}^{(1)}+O(\epsilon^2)\rangle(\mathcal{S}_D[\varphi_j](x)+\epsilon \mathcal{Z}(x)+O(\epsilon^2))}{\lambda-\lambda_j-\epsilon\lambda_{j,\epsilon}^{(1)}+{O(\frac{\epsilon^2}{\lambda-\lambda_j})}}\\
=&\sum_{j=1}^{\infty}\frac{\langle \frac{\partial H}{\partial \nu},\varphi_j\rangle\mathcal{S}_D[\varphi_j](x)+\epsilon\bigg{(}\langle \frac{\partial H}{\partial \nu},\varphi_{j,\epsilon}^{(1)}\rangle\mathcal{S}_D[\varphi_j](x)+\langle G^{(1)},\varphi_j\rangle\mathcal{S}_D[\varphi_j]+\langle \frac{\partial H}{\partial \nu},\varphi_j\rangle \mathcal{Z}(x)\bigg{)}+O(\epsilon^2)}{(\lambda-\lambda_j)
(1-\frac{\epsilon\lambda_{j,\epsilon}^{(1)}}{\lambda-\lambda_j}+{O(\frac{\epsilon^2}{\lambda-\lambda_j}))}}\\
=&\bigg{(}\sum_{j=1}^{\infty} \langle \frac{\partial H}{\partial \nu},\varphi_j\rangle\mathcal{S}_D[\varphi_j](x)+\epsilon\big{(}\langle \frac{\partial H}{\partial \nu},\varphi_{j,\epsilon}^{(1)}\rangle\mathcal{S}_D[\varphi_j](x)+\langle G^{(1)},\varphi_j\rangle\mathcal{S}_D[\varphi_j](x)\\&+\langle \frac{\partial H}{\partial \nu},\varphi_j\rangle \mathcal{Z}(x)\big{)}+O\bigg{(}\epsilon^2\bigg{)}\bigg{)}
\cdot\bigg{(}\sum_{k=0}^{\infty}\frac{1}{(\lambda-\lambda_j)}\big{(}\frac{\epsilon\lambda_{j,\epsilon}^{(1)}}
{\lambda-\lambda_j}+{O(\frac{\epsilon^2}{\lambda-\lambda_j})\big{)}^k}\bigg{)}\\
=&\sum_{j=1}^{\infty}\frac{\langle\frac{\partial H}{\partial \nu},\varphi_j\rangle\mathcal{S}_D[\varphi_j](x)}{\lambda-\lambda_j}+\epsilon\bigg{(}\frac{\langle \frac{\partial H}{\partial \nu},\varphi_{j,\epsilon}^{(1)}\rangle\mathcal{S}_D[\varphi_j](x)+\langle G^{(1)},\varphi_j\rangle\mathcal{S}_D[\varphi_j](x)+\langle \frac{\partial H}{\partial \nu},\varphi_j\rangle \mathcal{Z}(x)}{\lambda-\lambda_j}\\&+\frac{\langle\frac{\partial H}{\partial\nu},\varphi_j\rangle\mathcal{S}_D[\varphi_j](x)\lambda_{j,\epsilon}^{(1)}}
{(\lambda-\lambda_j)^2}\bigg{)}{+O\left(\frac{\epsilon^2}{\text{dist}(\lambda,\sigma(\mathcal{K}_D^*))^2}\right)}.
\end{align*}
\end{small}
Therefore it follows that
\begin{align*}
\mathcal{F}(\partial D_\epsilon)-\mathcal{F}(\partial D)&=\epsilon\sum_{j=1}^{\infty}\frac{\langle \frac{\partial H}{\partial \nu},\varphi_{j,\epsilon}^{(1)}\rangle+\langle G^{(1)},\varphi_j\rangle\mathcal{S}_D[\varphi_j](x)+\langle \frac{\partial H}{\partial \nu},\varphi_j\rangle \mathcal{Z}(x)}{\lambda-\lambda_j}\\
&+\epsilon\sum_{j=1}^{\infty}\frac{\langle\frac{\partial H}{\partial \nu},\varphi_j\rangle\mathcal{S}_D[\varphi_j](x)
\lambda_{j,\epsilon}^{(1)}}{(\lambda-\lambda_j)^2}{+O\left(\frac{\epsilon^2}{\text{dist}(\lambda,\sigma(\mathcal{K}_D^*))^2}\right)}.
\end{align*}

The proof is complete.
\end{proof}

\begin{rem}\label{rem:32}
From (\ref{ux}), one readily sees that the \textcolor[rgb]{0,0,0}{$j$-th mode} in the expansion formula contains both $\mathcal{O}\left(\frac{1}{\lambda-\lambda_j}\right)$ and $\mathcal{O}\left(\frac{1}{(\lambda-\lambda_j)^2}\right)$. Thus, for the sufficiently small loss ($\mathfrak{Im}(\varepsilon_D)\rightarrow0$), the \textcolor[rgb]{0,0,0}{$j$-th mode} will exhibit a large peak if the plasmon resonance condition (\ref{sp-cond}) is fulfilled.
\end{rem}

From Theorem \ref{spectral}, we can straightforwardly obtain the spectral representation formula of the shape sensitivity function as follows.
\begin{cor}\label{cor3.6}
{If $\epsilon=o\left(\mathrm{dist}(\lambda,\sigma(\mathcal{K}_D^*))^2\right)$, (as $\mathrm{dist}(\lambda,\sigma(\mathcal{K}_D^*))\rightarrow0$),} the shape sensitivity function can be represented as
\begin{align*}
SSF(\partial D)=\mathcal{T}(x),
\end{align*}
where $\mathcal{T}(x)$ is defined by (\ref{ux}).
\end{cor}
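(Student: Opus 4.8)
The plan is to deduce the corollary directly from Theorem \ref{spectral} together with the definition of the shape sensitivity functional in Definition \ref{de1}. First I would recall that, by definition,
\begin{align*}
SSF(\partial D) = \lim_{\epsilon\to 0}\frac{u_\epsilon^s|_{\partial\Omega} - u^s|_{\partial\Omega}}{\epsilon},
\end{align*}
so the task reduces to extracting the leading-order coefficient in $\epsilon$ from the perturbation $u_\epsilon^s - u^s$ and verifying that the remainder is genuinely negligible in the relevant limit.

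Next I would invoke the spectral expansion established in Theorem \ref{spectral}, namely
\begin{align*}
u_\epsilon^s(x) - u^s(x) = \epsilon\,\mathcal{T}(x) + O\!\left(\frac{\epsilon^2}{\mathrm{dist}(\lambda,\sigma(\mathcal{K}_D^*))^2}\right),
\end{align*}
with $\mathcal{T}(x)$ given by \eqref{ux}. Dividing through by $\epsilon$ yields
\begin{align*}
\frac{u_\epsilon^s(x)-u^s(x)}{\epsilon} = \mathcal{T}(x) + O\!\left(\frac{\epsilon}{\mathrm{dist}(\lambda,\sigma(\mathcal{K}_D^*))^2}\right),
\end{align*}
so that passing to the limit $\epsilon\to 0$ produces $SSF(\partial D) = \mathcal{T}(x)$ precisely when the remainder vanishes. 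This step is purely mechanical once the expansion of Theorem \ref{spectral} is in hand.

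The one point that requires care — and where the stated hypothesis enters — is the behaviour of the error term in the plasmon resonance regime. Unlike a naive limit taken with a fixed spectral gap, here $\mathrm{dist}(\lambda,\sigma(\mathcal{K}_D^*))$ is itself tending to zero (this is exactly the regime in which the sensitivity is amplified, per Remark \ref{rem:32}), so the factor $\mathrm{dist}(\lambda,\sigma(\mathcal{K}_D^*))^{-2}$ blows up and cannot be discarded for free. The hypothesis $\epsilon = o\bigl(\mathrm{dist}(\lambda,\sigma(\mathcal{K}_D^*))^2\bigr)$ is imposed precisely to guarantee that $\epsilon/\mathrm{dist}(\lambda,\sigma(\mathcal{K}_D^*))^2 \to 0$ along this joint limit, rendering the remainder $o(1)$ and hence negligible after division by $\epsilon$. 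I expect this interplay between the two small parameters to be the only genuine subtlety; once it is disentangled, the identification $SSF(\partial D)=\mathcal{T}(x)$ follows with no further computation.
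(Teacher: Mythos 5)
Your proposal is correct and is exactly the argument the paper intends: the paper offers no written proof, stating only that the corollary follows ``straightforwardly'' from Theorem \ref{spectral}, and your derivation --- dividing the expansion $u_\epsilon^s-u^s=\epsilon\,\mathcal{T}(x)+O\bigl(\epsilon^2/\mathrm{dist}(\lambda,\sigma(\mathcal{K}_D^*))^2\bigr)$ by $\epsilon$ and using the hypothesis $\epsilon=o\bigl(\mathrm{dist}(\lambda,\sigma(\mathcal{K}_D^*))^2\bigr)$ to kill the remainder in the joint limit --- is precisely the intended reasoning, with the added merit that you make explicit why that hypothesis is needed in the resonance regime.
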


\begin{rem}
Similar to Remark~\ref{rem:32}, one sees that when $\lambda=\frac{\varepsilon_D+\varepsilon_m}{2(\varepsilon_D-\varepsilon_m)}$ is very close to an eigenvalue $\lambda_j$ of the NP operator, i.e., the plasmon resonance occurs, the shape sensitivity function $SSF(\partial D)$ will be amplified dramatically for the $j$-th mode. Hence, the plasmon resonance can improve the sensitivity of the shape construction. Moreover, from the Drude's model (\ref{Drude}), we can compute the plasmon resonance frequency for improving the sensitivity.
\end{rem}

\begin{rem}
From a numerical point of view, the discrete version of the shape sensitivity functional is the shape sensitivity matrix. The sensitivity of the shape reconstruction can be analysed by the singular value decomposition of the sensitivity matrix. In Section 5, the numerical results shall show that when the plasmon resonance occurs, the singular values of the sensitivity matrix increase dramatically. That is the plasmon resonance technology can enhance the stability of the Gauss-Newton iteration algorithm that we use therein (see Section 4).
\end{rem}

\section{Tikhonov regularization and Laplace approximation}

In this section, we discuss the numerical issues for the shape reconstruction. First, a numerical implementation requires a parametrization of the boundary $\partial D$. Here we assume that $\partial D$ is starlike boundary curve with respect to the origin, i.e. there exists $q\in C^2[0,2\pi]$ such that
\begin{align*}
\partial D=\{\vec{q}(t)=q(t)\bigg{(} \begin{array}{c}
\cos t\\
\sin t
\end{array} \bigg{)},\ t\in[0,2\pi]   \}.
\end{align*}
The admissible set $X=\{q\in C^2([0,2\pi]): q>0 \}$, and the  forward operator maps $X$ into $H^{\frac{1}{2}}(\partial \Omega)$. Without change of notation we write $\mathcal{F}(q)=u^s(x)|_{\partial\Omega}$.

In order to overcome the ill-posedness of the inverse problem, we apply the Tikhonov regularization method to tackle it. The corresponding variational functional is given as follows,
\begin{align}
\label{variational reg}
J[q]:=\frac{1}{2}\parallel \mathcal{F}(q)-u^{s,\delta}\parallel_2^2+\frac{\mu}{2}\parallel q \parallel_{L^2[0,2\pi]}^2,
\end{align}
where, $u^{s,\delta}=(u_1^{s,\delta},u_2^{s,\delta},\cdots,u_n^{s,\delta})$ signfies the discrete measurement data on $\partial\Omega$. Here, $\|\cdot\|_2$ denotes the $2$-norm in $\mathbb{R}^n$, and $\mu$ is the regularization parameter. Moreover, the measurement data $u^{s,\delta}$ and the exact data $u^{s}$ satisfies $\|u^{s,\delta}-u^{s}\|_2\leq\delta$.

There are many iterative algorithms to solve the variational problem (\ref{variational reg}).
In this paper, we apply the Levenberg-Marquardt  method \cite{Doicu2010, Hanke1997} to find the minimize of $J[q]$, which is essentially a variant of the Gauss-Newton iteration. Assuming that $q^*$ is an approximation of $q$, then the nonlinear mapping $\mathcal{F}$ in (\ref{variational reg}) can be replaced approximatively by its linearization around $q^*$, i.e.
\[
\mathcal{F}(q)\approx \mathcal{F}(q^*)+\mathcal{F}'(q^*)(q-q^*).
\]
The nonlinear inverse problem $\mathcal{F}(q)=u^{s,\delta}$ can then be converted to a linear inverse problem
\[
\mathcal{F}'(q^*)(q-q^*)=u^{s,\delta}-\mathcal{F}(q^*).
\]
Thus, minimizing (\ref{variational reg}) can easily be seen to minimizing
\begin{align*}
J[q]:=\frac{1}{2}\parallel \mathcal{F}'(q^*)\delta q-(u^{s,\delta}-\mathcal{F}(q^*))\parallel_2^2+\frac{\mu}{2}\parallel \delta q \parallel_{L^2[0,2\pi]}^2,
\end{align*}
where, $\delta q=q-q^*$. We formulate the Levenberg-Marquardt iteration as Algorithm 1.
\begin{algorithm}[H]
\caption{Levenberg-Marquardt algorithm}
     \textbf{Input}: The regularization parameter $\mu$, the noise level $\delta$ and the maximum iterations $M$ \\
     \textbf{Output}: $q_{k+1}$.\\
     \textbf{Require}:\\
      ~1:~~\textbf{Initialize} the solution $q_0$;\\
      ~2:~~Settle the forward problem and obtain the additional data $u^{s,\delta}$;\\
      3:~~\textbf{While} $k< M$\\
      4:~~Compute the forward problem and set $F_k=u^{s,\delta}-\mathcal{F}(q_k);$\\
      5:~~Compute the Jacobian matrix $G=\mathcal{F}'(q_k)$;\\
      6:~~Compute $\delta q_k=(G^{T}G+\mu I)^{-1}(G^{T}F_k);$\\
      7:~~\textbf{Update} the solution: $q_{k+1}=q_{k}+\delta q_k;$\\
      8:~~$k\leftarrow k+1$;\\
      9:~~\textbf{Terminate} if $\parallel \delta q_k\parallel<eps $\\
      10:~~\textbf{end}
\end{algorithm}

We also decipher the inverse problem from a Bayesian perspective so that we can capture more statistical information about the solution. From the classical Bayesian theory, and noticing that the observation error $\xi$ is assumed to be an independent and identically distributed Gauss random vector with mean zero and the covariance matrix $B=\delta^2 I$ (here $I$ is the unit matrix), we can write the minimization functional as
\begin{align*}
J[q]&\propto \frac{1}{2\delta^2}\parallel \mathcal{F}(q)-u^\delta\parallel_2^2+\frac{\mu}{2\delta^2}\parallel q\parallel_{L^2[0,2\pi]}^2\\
&=\frac{1}{2}\parallel\mathcal{F}(q)-u^\delta \parallel_{B}^2+\frac{\mu}{2\delta^2}\parallel q\parallel_{L^2[0,2\pi]}^2\\
&=:J_B(q),
\end{align*}
where $\parallel \cdot\parallel_B$ is a covariance weighted norm given by $\parallel\cdot\parallel_{B}=\parallel\delta^{-1}I\cdot\parallel_{2}$, and the minimizer of $J_B$ defines the maximum a posteriori estimator
\begin{align}
\label{MAP}
q_{\mathrm{MAP}}=\arg \min_{q} J_B(q).
\end{align}

The Laplace approximation replaces the complicated posterior with a normal distribution located at the maximum a posteriori value $q_{\mathrm{MAP}}$. Its essence is a linearization around the $\mathrm{MAP}$ point $q_{\mathrm{MAP}}$  (cf. \cite{Schillings2020}). It consists of approximating the posterior measure (or distribution) by
$\omega\approx N(q_{\mathrm{MAP}},C_{\mathrm{MAP}})$, where
\begin{align}
\label{C_map}
C_{\mathrm{MAP}}=(J''_B)^{-1}=(\frac{\mu}{\delta^2}I+\frac{1}{\delta^2}G^{T}G)^{-1},
\end{align}
and $G$ is the Jacobian matrix of the forward operator $\mathcal{F}$ at the point $q$. Notice that the covariance formula ($\ref{C_map}$) only uses the first order derivatives of $\mathcal{F}$. The implementation of the Laplace approximation is presented in the following algorithm \cite{Iglesias2013}.

\begin{algorithm}[H]
\small
\caption{Laplace approximation (LA) for sampling.}
      1:~~Compute $q_{\mathrm{MAP}}$ from $(\ref{MAP})$ by using Algorithm 1, and $C_{\mathrm{MAP}}$ from $(\ref{C_map})$, respectively;\\
      ~2:~~Compute the \textcolor[rgb]{0,0,0}{Cholesky factor $L$} of $C_{\mathrm{MAP}}$, i.e., $C_{\mathrm{MAP}}=LL^{T}$;\\
      ~3:~~For $j=\{1,...,N_e\}$, generate $q^{j}=q_{\mathrm{MAP}}+L^{T}z^{j}$,
      where $z^{j}\sim N(0,I)$.\\
   \label{algorithm-LA}
\end{algorithm}
In this algorithm, samples generated by algorithm \ref{algorithm-LA} are drawn from $N(q_{\mathrm{MAP}} ,C_\mathrm{MAP})$, and so the ensemble of $N_e$
realizations $\{q_j\}_{j=1}^{N_e}$ provides an approximation to $N(q_{\mathrm{MAP}} ,C_\mathrm{MAP})$ and hence the posterior.
Finally, we use the mean of the sample $\bar{q}=\frac{1}{N_e}\sum_{j=1}^{N_e}q_j$ as an approximation of $q_{\mathrm{MAP}}$, where the
convergence of $\bar{q}$ follows from the strong law of large numbers. From the classical Gaussian statistic theory, we find that $\bar{q}$ is consistent and the best unbiased estimate of $q_{\mathrm{MAP}}$.

\section{ Numerical results and discussions }
In this section, we present several numerical examples to illustrate the salient and promising features of the proposed reconstruction scheme.

In all of our numerical examples, the measurement boundary curve $\partial\Omega$ is given by the circle of radius 3 and centred at the origin, that is $\partial \Omega=\{3(\cos t, \sin t ), 0\leq t\leq 2\pi\}$. Set the incident field $H(x)=x_1$. We solve the direct problem through the $\mathrm{Nystr\ddot{o}m}$ method \cite{Kress1999}, which is discretized with $n=80$ grid points. Furthermore, in an effort to avoid committing an inverse crime, the number of collocation points for obtaining the synthetic data was chosen to be different from the number of collocation points within the inverse solver. In addition, we approximate the radial function $q(t)$ for unknown interior boundary curve $\partial D$ by the trigonometric series
\[
q(t)\approx \sum_{k=0}^m a_k\cos kt +\sum_{k=1}^m b_k\sin kt, \ \ \ 0\leq t\leq 2\pi,
\]
where $m\in\mathbb{N}$ and the vector $q=(a_0, ..., a_m, b_1,...,b_m)\in \mathbb{R}^{2m+1}$.

 In the iterative process, we choose a circle as the initial guess, which contains the inclusion $D$. \textcolor[rgb]{0,0,0}{A finite difference method is used to calculate the Jacobian matrix $G$}, and the maximum number of iteration steps is 100.  The number of samples $N_e$ is 10000, and we use the following stopping rule
\[
E_k=\parallel q_k-q_{k-1}\parallel_{L^2}\leq 10^{-5}.
\]
The noisy measured data are generated by
\[
u^{s,\delta}=u^s(x)+\delta\xi, \ \ x\in\partial\Omega,
\]
where $u^s(x)$ is the exact data, $\delta$ indicates the noise level, and $\xi$ is the Gaussian random vector with a zero mean and unit standard deviation.

The accuracy of the approximate solution $\tilde{q}_{N_e}$ by the LA algorithm is characterised by comparing to the exact solution $q(x)$  via the relative error
\[
e_\gamma=\frac{\parallel \tilde{q}_{N_e}-q(x) \parallel_{L^2(\partial D)}}{\parallel q(x)\parallel_{L^2(\partial D)}}.
\]

In particular, we specifically take the permittivity $\varepsilon_m=24.8 F/m$ (such as alcohol) in the first example, and in all of the examples we set $\varepsilon_m=\varepsilon_0$. {Moreover, except for Example~\ref{exm-disk}, the spectrum of $\mathcal{K}_D^*$ needs to calculated numerically, from which one can then compute the corresponding plasmon resonance frequency. We use the $\mathrm{Nystr\ddot{o}m}$ method for spectral calculations. However, we would like to emphasise that in practice, the plasmon resonance frequency can be measured by special apparatus, when one can observe significant enhancements in the electric field due to the resonance.}

\begin{exm}\label{exm-disk}
In this example, we consider the reconstruction of a circle object with
\[
q(t)=0.5, \ \ \ 0\leq t\leq2\pi.
\]
In the iteration algorithm, we choose a circle of radius 0.6 as the initial guess.
\end{exm}

First, we investigate the influence of  different $\lambda$ on the reconstruction effect. The numerical results for Example \ref{exm-disk} with  $\lambda$  taking different values are shown in Figure \ref{disk} (a). It is well known that the eigenvalues of $\mathcal{K}_D^*$ are $\{0,\frac{1}{2}\}$ for $D$ being a disk. If setting $\lambda_1=0$, from Section 2.2, we can see that the Fr\"{o}hlich condition is satisfied based on the Drude's model for metal nanorod and the plasmon resonance occurs {(though our theoretical analysis does not contain the essential spectrum case).}
In particular,  we take the resonance frequency $\omega_1=3.63\cdot10^{14} Hz$, and according to the experimental data of gold nanorod  in \cite{Ordal1983} (the data coincide with the Drude model), we can get $\mathfrak{Re}(\varepsilon_D(\omega_1))=-24.8$, and the imaginary part $\mathfrak{Im}(\varepsilon_D(\omega_1))=0.797$. Furthermore, by (\ref{lambda}), it implies that $\lambda\approx0-8\cdot10^{-3}i$.

\begin{figure}
\centering
\subfigure[]{
    \label{fig:subfig:a}
  \includegraphics[width=2.5in, height=2.2in]{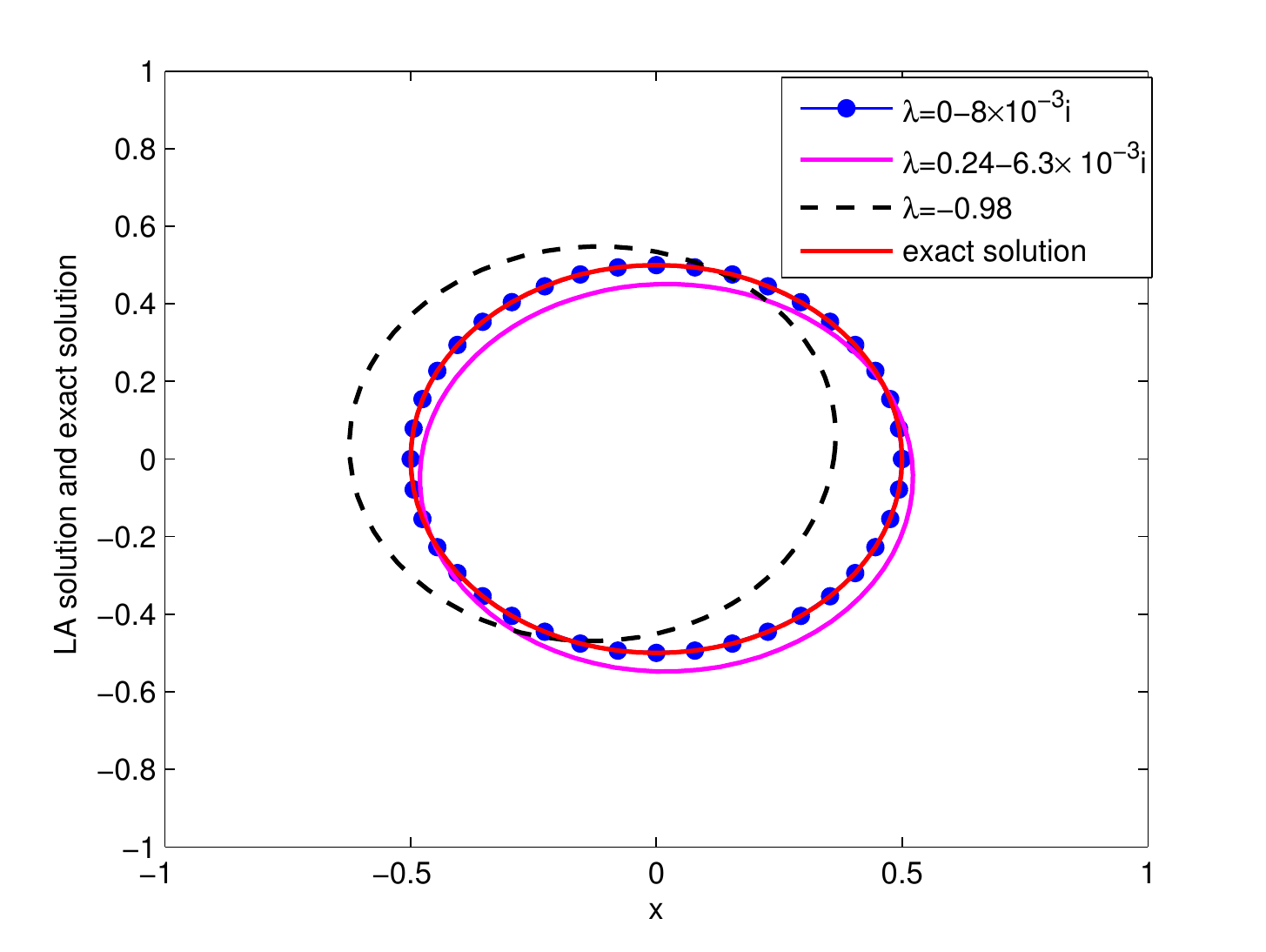}}
  \subfigure[]{
    \label{fig:subfig:b}
  \includegraphics[width=2.5in, height=2.2in]{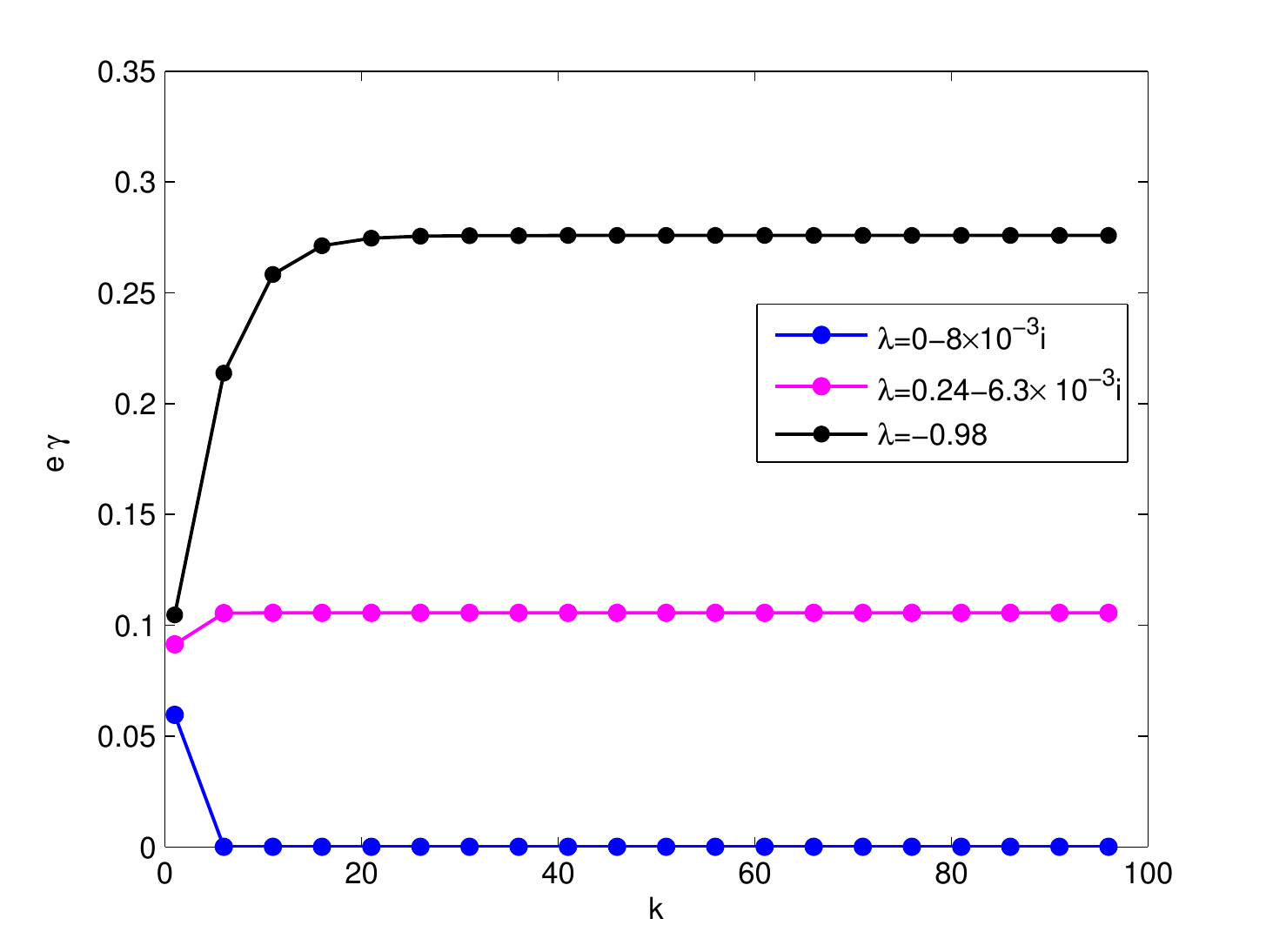}}
  \caption{Reconstruction the shape  for Example \ref{exm-disk} with 1\% noise data  and the regularization parameter $\mu = 0.01$.  (a) Reconstructions with different $\lambda$ as well as the exact solution, (b) the relative error $e_\gamma$ versus the iterations step $k$, associated to different values of $\lambda$. }
  \label{disk}
\end{figure}
Next, we take $\omega_2=2.30\cdot10^{14} Hz$ and based on the experimental data in \cite{Ordal1983}, we can calculate $\lambda=0.244-6.3\cdot10^{-3}i$. It is obvious that when taking $\omega_2$, the real part of $\lambda(\omega_2)$ satisfies $|\mathfrak{Re}(\lambda(\omega_2))<\frac{1}{2}|$, which is only an approximate resonance frequency. As seen in Figure \ref{disk} (a),  a perfect reconstruction result is obtained when taking the resonance frequency $\omega_1$. When $\lambda=0.244-6.3\cdot10^{-3}i$, although the reconstructed result is good, it is still poor compared to the case with the resonant frequency $\omega_1$. The case with $\lambda =-0.98$  indicates a normal material, i.e. the permittivity $\varepsilon_D= 8$ is positive constant, and the reconstruction effect is poorer.
The  plot in Figure \ref{disk} (b) illustrates the relative error $e_\gamma$ versus the iteration number with different $\lambda$. It can be seen that  the relative error  can quickly reach the convergence  and remain small when $\omega_1$ is the plasmon resonance frequency. When $\lambda=0.244-6.3\cdot10^{-3}i$, the relative error also converges quickly, but is larger than that at the plasmon resonance frequency. In particular, the relative error increases rapidly even up to 27\%, and the inversion is rather imprecise with $\lambda=-0.98$.
\begin{figure}
\centering
\subfigure[]{
    \label{fig:subfig:a}
  \includegraphics[width=2.5in, height=2.2in]{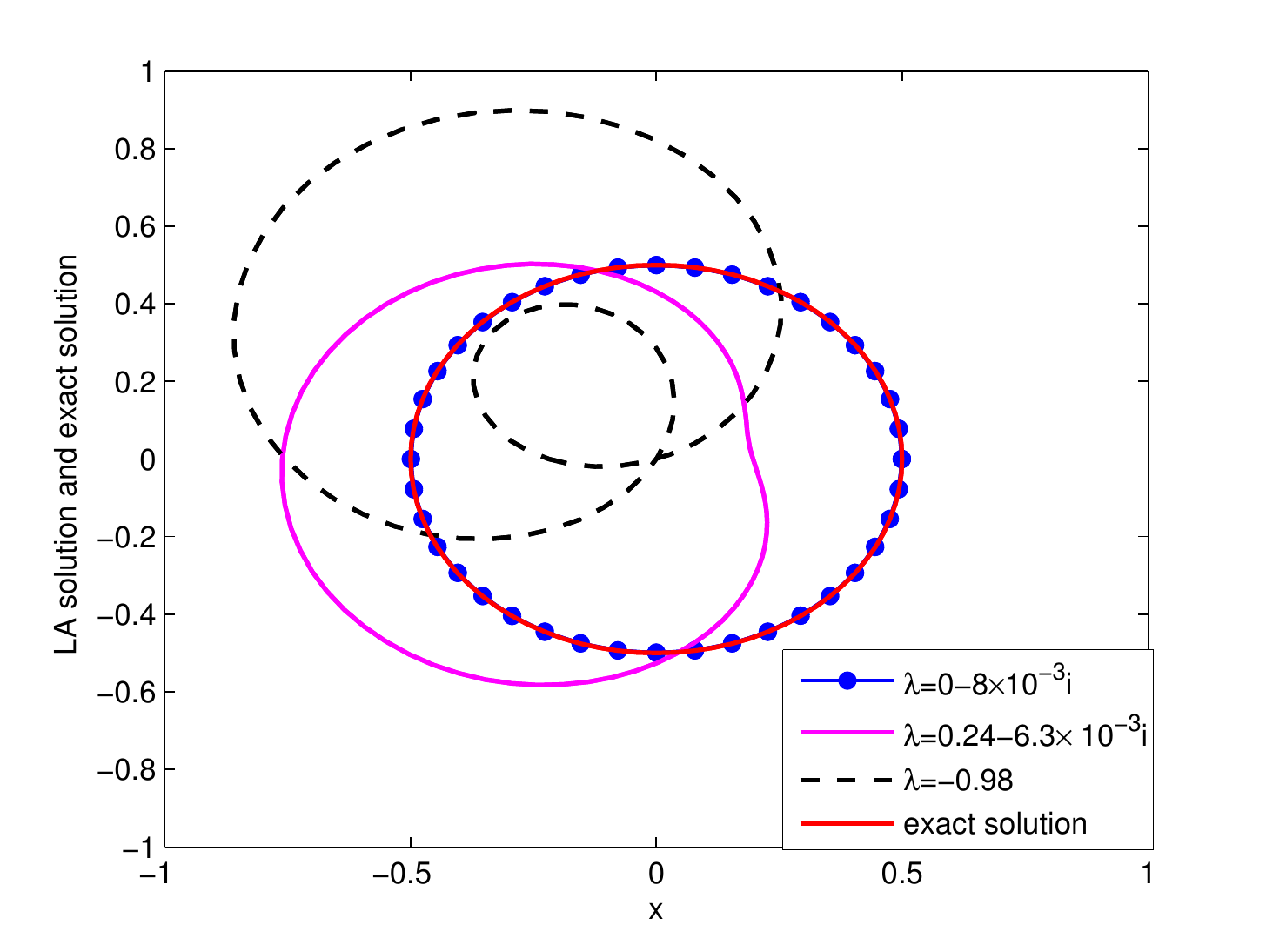}}
  \subfigure[]{
    \label{fig:subfig:b}
  \includegraphics[width=2.5in, height=2.2in]{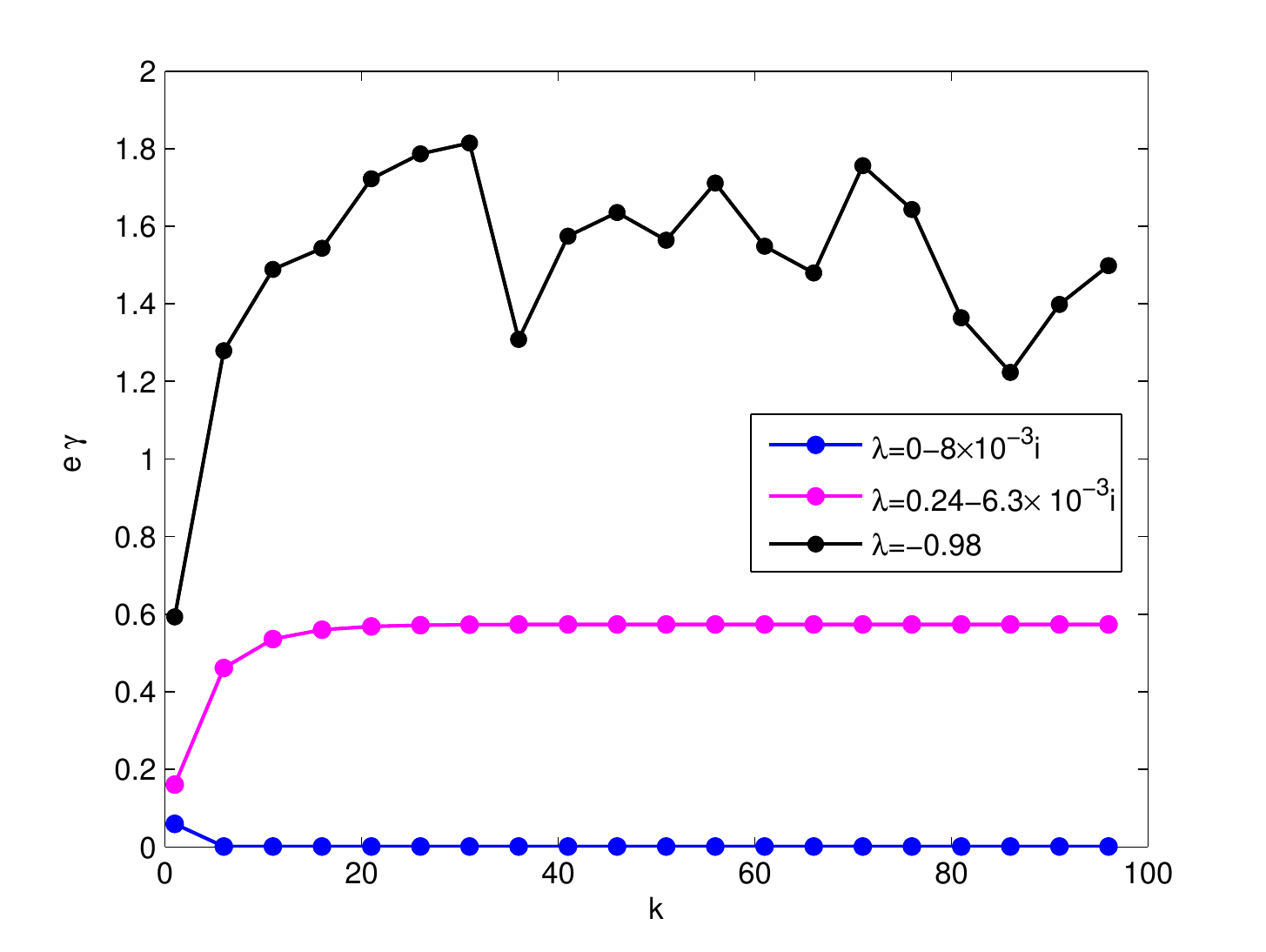}}
  \caption{Reconstruction of the shape for Example \ref{exm-disk} with 5\% noise data and the regularization parameter $\mu=0.05$. (a) Reconstruction under different $\lambda$  as well as the exact solution, (b) the accuracy error $e_\gamma$ of the number of the iterations $k$, for different values of $\lambda$. }
  \label{disk5}
\end{figure}

The numerical results for Example \ref{exm-disk}, which has $5\%$ noise in the data, are shown in Figure \ref{disk5}(a) for different $\lambda$. Compared to the $1\%$ noise in the data shown in Figure \ref{disk}(a), it is clear that as the noise level increases, the reconstruction effect becomes worse. It is worth noting that when $\omega$ is a plasmon resonance frequency, the reconstruction is very good even at larger error levels. For $\lambda=-0.98$ (general materials) the inversion result deviates from the exact solution quickly as the error level increases. We also list the relative errors for different noise levels in Table \ref{tab1}.

\begin{table}
\centering
\caption{ Numerical results of Example \ref{exm-disk} for $e_\gamma$ associated with different $\lambda$ and noise level $\delta$ }
\begin{tabular}{c|c|c}
\Xhline{1pt}
 $\lambda$ &$ \delta=0.01$ &$\delta=0.05$\\
  \hline
  $0-8\cdot10^{-3}i$  &$7.2\cdot10^{-5}$ &$9.4\cdot10^{-4}$  \\
 \hline
 $0.24-6.3\cdot10^{-3}i$ &$0.106$&$0.574$\\
  \hline
 $-0.98$     &$0.276$&$1.546$ \\
 \hline
\Xhline{1pt}
\end{tabular}
\label{tab1}
\end{table}
\begin{exm}\label{exm2}
 In this example, we consider the reconstruction of a bean contour with a radial function,
\[
q(t)=\frac{4/5+18/25\cos t+3/25\sin 2t}{1+7/10\cos t},\ \ \ 0\leq t\leq2\pi.
\]
We choose a circle of radius 0.72 as the initial guess.
\end{exm}

\begin{figure}
\centering
\subfigure[]{
    \label{fig:subfig:a}
  \includegraphics[width=2.5in, height=2.2in]{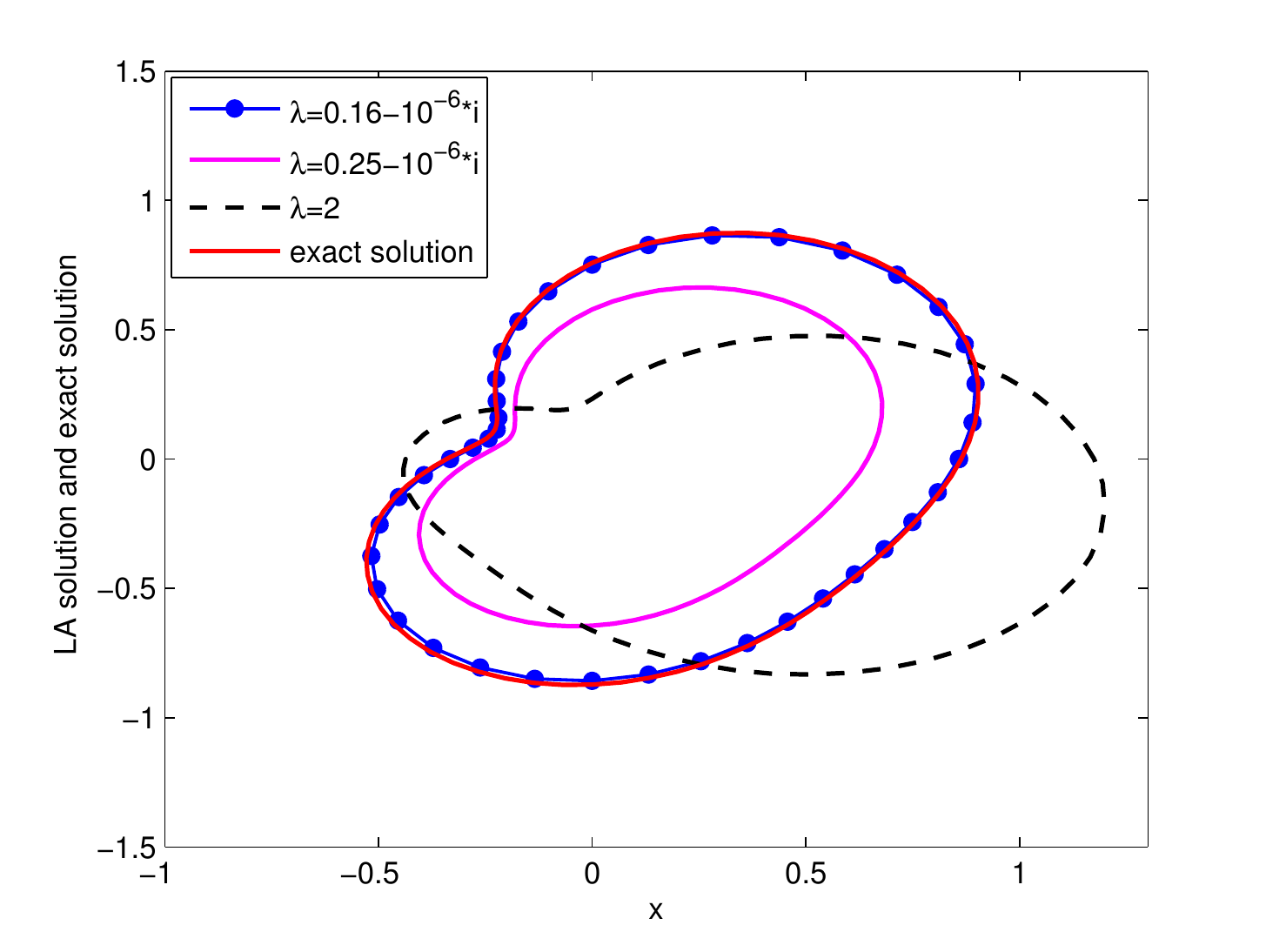}}
  \subfigure[]{
    \label{fig:subfig:b}
  \includegraphics[width=2.5in, height=2.2in]{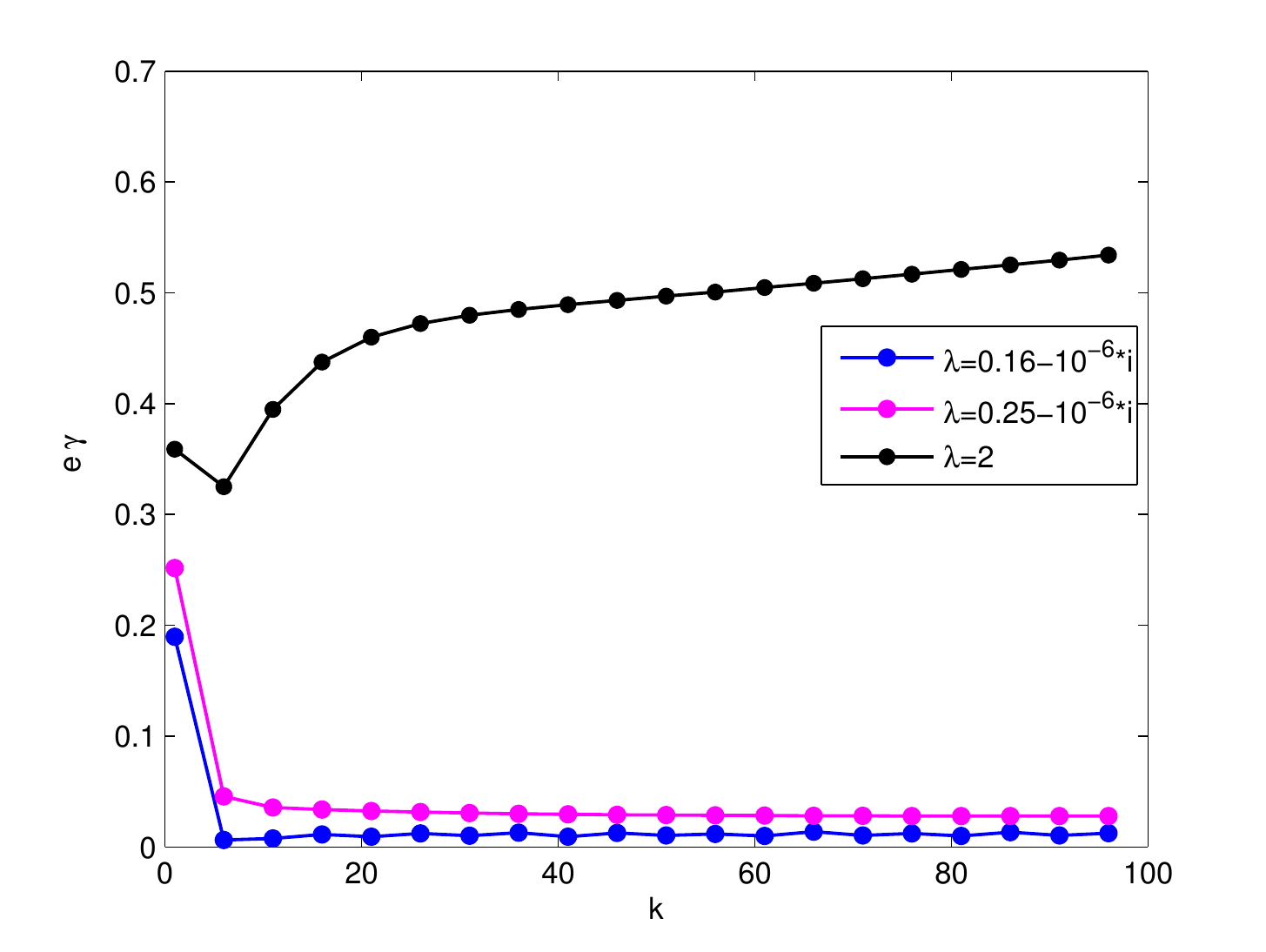}}
  \caption{Reconstruction of the shape in Example \ref{exm2} with 1\% noise data and the regularization parameter $\mu=0.1$. (a) Reconstruction with different $\lambda$ as well as the exact solution, (b) the relative error $e_\gamma$ versus the iteration step $k$ associated with different values of $\lambda$. }
  \label{examp2-001}
\end{figure}
The numerical results for Example \ref{exm2} with $\lambda$ taking different values are shown in Figure \ref{examp2-001} (a) and the relative error $e_\gamma$ versus the iteration number with different $\lambda$ in Figure \ref{examp2-001} (b). The real part of $\lambda=0.16-10^{-6}i$ is the eigenvalue of $\mathcal{K}^*_{D}$ in Example \ref{exm2}. It represents very accurate approximations of the LA solution to the exact solution, and in the iteration algorithm, the relative error quickly reaches a stable state and remains small. In addition, when $\lambda=2$, the reconstruction is bad. The results show that the relative error becomes more oscillating as the noise level $\delta$ increases; see Figure \ref{examp2-005} and Table \ref{tab2} for $e_\gamma$ with different $\lambda$ and $\delta$.
\begin{figure}
\centering
\subfigure[]{
    \label{fig:subfig:a}
  \includegraphics[width=2.5in, height=2.2in]{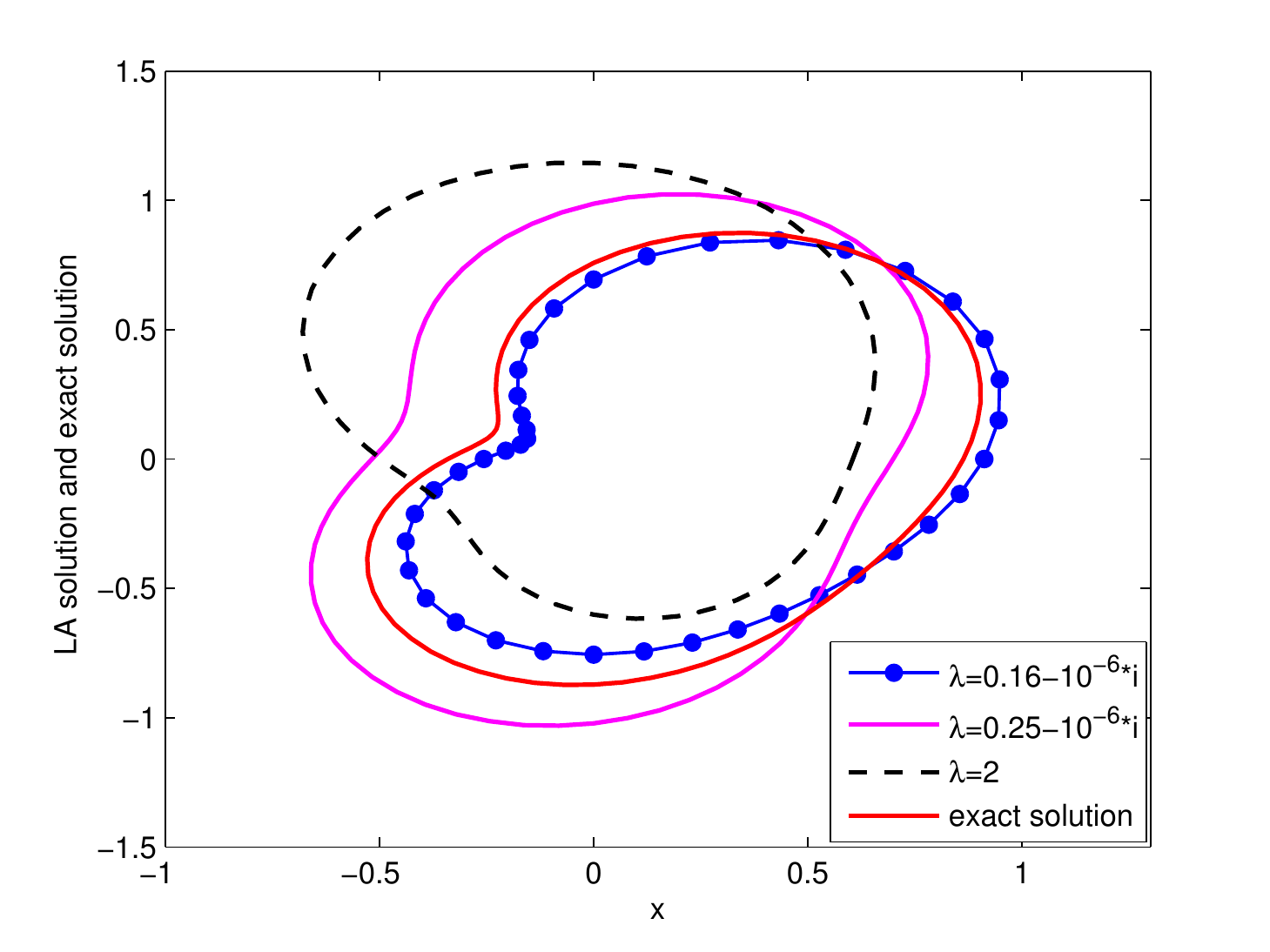}}
  \subfigure[]{
    \label{fig:subfig:b}
  \includegraphics[width=2.5in, height=2.2in]{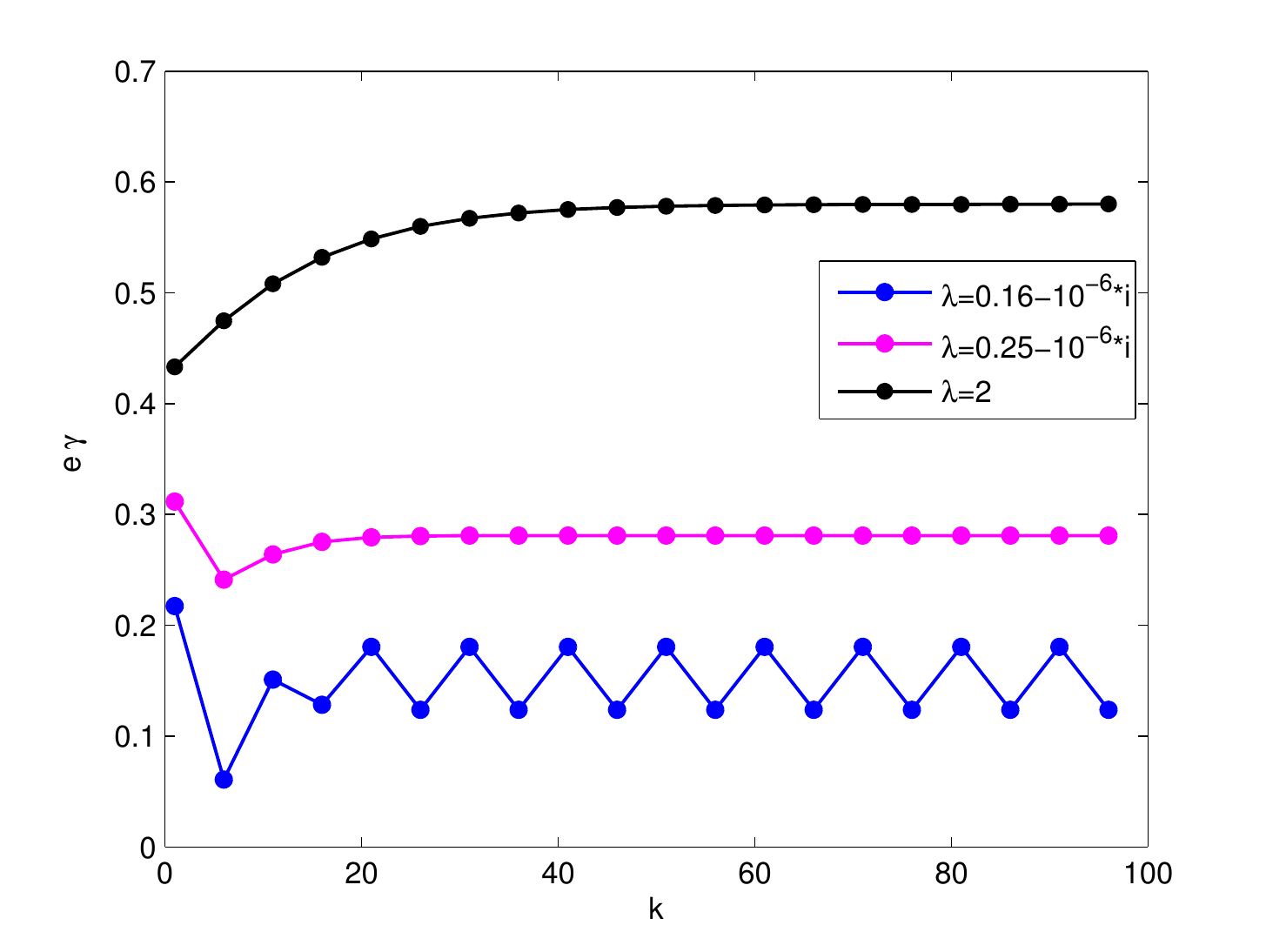}}
  \caption{Reconstruction of the shape in Example \ref{exm2} with 5\% noise data and the regularization parameter $\mu=0.1$. (a) Reconstruction with different $\lambda$ as well as the exact solution, (b) the relative error $e_\gamma$ versus iteration steps $k$ associated with different values of $\lambda$.}
  \label{examp2-005}
\end{figure}
\begin{table}
\small
\centering
\caption{ Numerical results of Example \ref{exm2} for $e_\gamma$ associated with different $\lambda$ and $\delta$ }
\begin{tabular}{c|c|c}
\Xhline{1pt}
 $\lambda $ &$ \delta=0.01$ &$\delta=0.05$\\
  \hline
  $0.16-10^{-6}i$  &$0.0081$ &$0.1237$  \\
 \hline
 $0.25-10^{-6}i$ &$0.0278$&$0.2810$\\
  \hline
 $2$     &$0.5304$&$0.5804$ \\
 \hline
\Xhline{1pt}
\end{tabular}
\label{tab2}
\end{table}

\begin{exm}\label{exm3}
In this example, we consider that the inclusion is a peanut, and the polar radius of the peanut is parameterized by
\[
q(t)=\sqrt{\cos^2 t+0.26\sin^2 (t+0.5)},\ \ \ 0\leq t\leq2\pi.
\]
In the iteration, we choose a circle of radius 0.78 as the initial guess.
\end{exm}

From \cite{Banks2009}, it is known that the singular value decomposition of the sensitivity matrix plays a key role in the uncertainty estimation. Let the singular value decomposition (SVD) of the sensitivity matrix (Jacobian matrix) $G$ of the forward operator at the true solution $q_{true}$ be denoted as
\begin{eqnarray*}
\label{Qresidual-eq}
\begin{split}
G(q_{true})=U \bigg{[}\begin{array}{c}
\mathrm\Lambda\\
0
\end{array} \bigg{]} V^T,
\end{split}
\end{eqnarray*}
where $U$ is an $n\times n$ orthogonal matrix, i.e. $U^TU=UU^T=I_n$, with $U_1$ containing the first $2m+1$ columns of $U$ and $U_2$ containing the last $n-(2m+1)$ columns, $U=[U_1\ U_2]$. The matrix $V$ is an $(2m+1)\times (2m+1)$ orthogonal matrix, i.e. $V^TV=VV^T=I_p$, and $v_i$ and $u_i$ denote the $i$th columns of $V$ and $U$, respectively. The diagonal matrix $\Lambda=diag(s_1,...,s_{2m+1})$ with strictly positive decreasing singular values $s_i$, i.e. $s_1\geq s_2\geq ...\geq s_{2m+1}\geq0$. Then the estimator $q$ has the following form \cite{Banks2009}:
\begin{align}
\label{SVD}
q=q_{true}+V\Lambda^{-1}U_1^T\tilde{\xi}=q_{true}+\sum_{i=1}^{2m+1}\frac{1}{s_i}v_iu_i^T\tilde{\xi}.
\end{align}
From $(\ref{SVD})$, it can be seen that the instability of the inverse problem is caused by the small singular values. In Example \ref{exm3}, the singular values of the sensitivity matrix are calculated at different values of $\lambda$.
In Figure \ref{examp3-001}(b), when $\lambda(\omega)=0.19-10^{-6}i$ ($\omega$ is a plasmon resonance frequency for Example \ref{exm3}), we can see that all the singular values of $G$ are lager than $\lambda=0.25-10^{-6}i$ and $\lambda=2$. Consequently, the numerical solution computed by using the plasmon resonance is in an excellent agreement with the exact solution (see Figure \ref{examp3-001}(a)). Hence, the plasmon resonance of the metal nanoparticles can correct the singular value of the sensitivity matrix and overcome the numerical instability. This numerical result is in agreement with our analysis in Section 3.2 that the sensitivity of the {far-field} data to the shape of the underlying domain can be enhanced at the resonant frequencies, thus reducing the ill-posedness of the inverse problem.

\begin{figure}
\centering
\subfigure[]{
    \label{fig:subfig:a}
  \includegraphics[width=2.5in, height=2.2in]{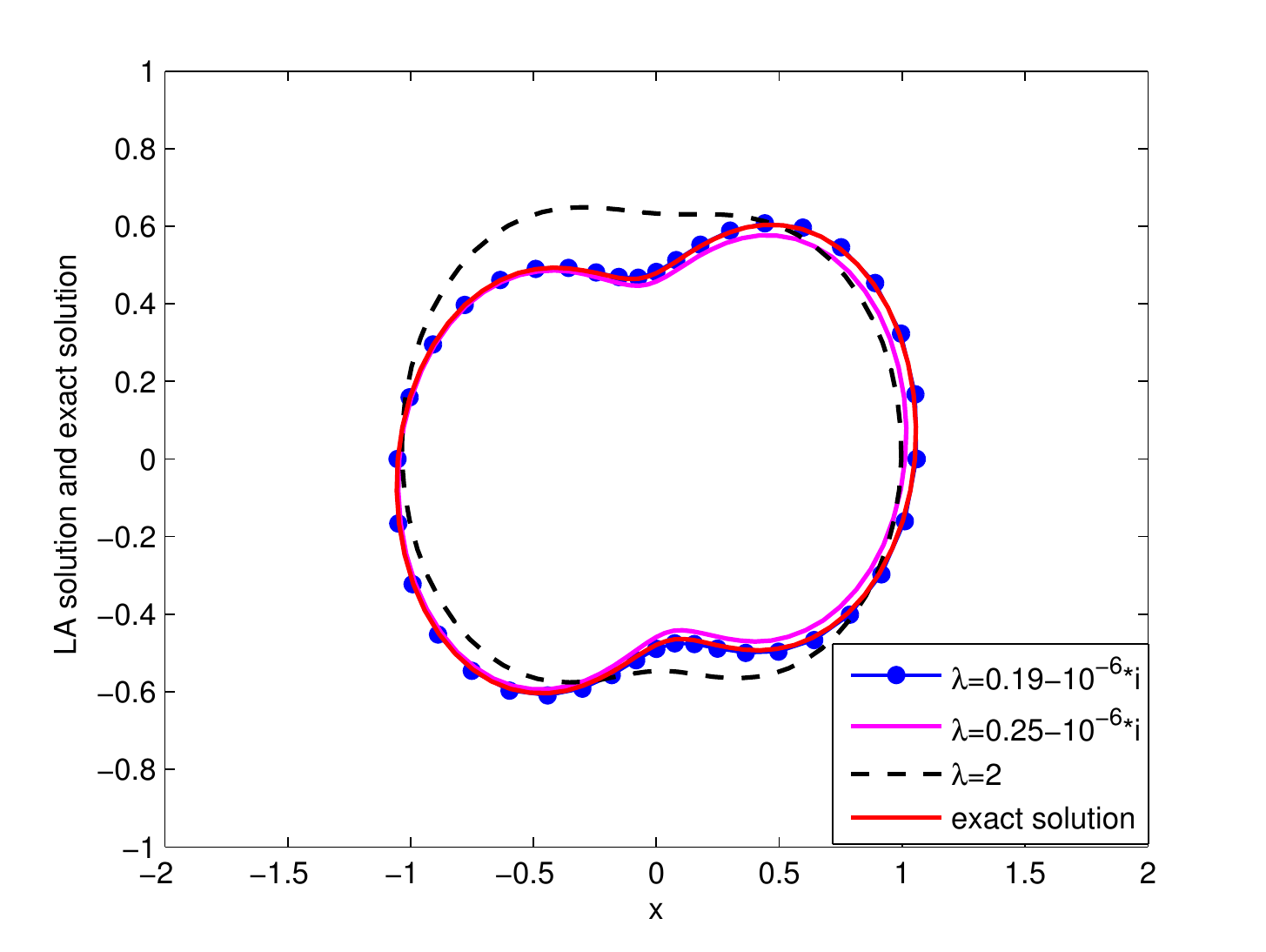}}
  \subfigure[]{
    \label{fig:subfig:b}
    \includegraphics[width=2.5in, height=2.2in]{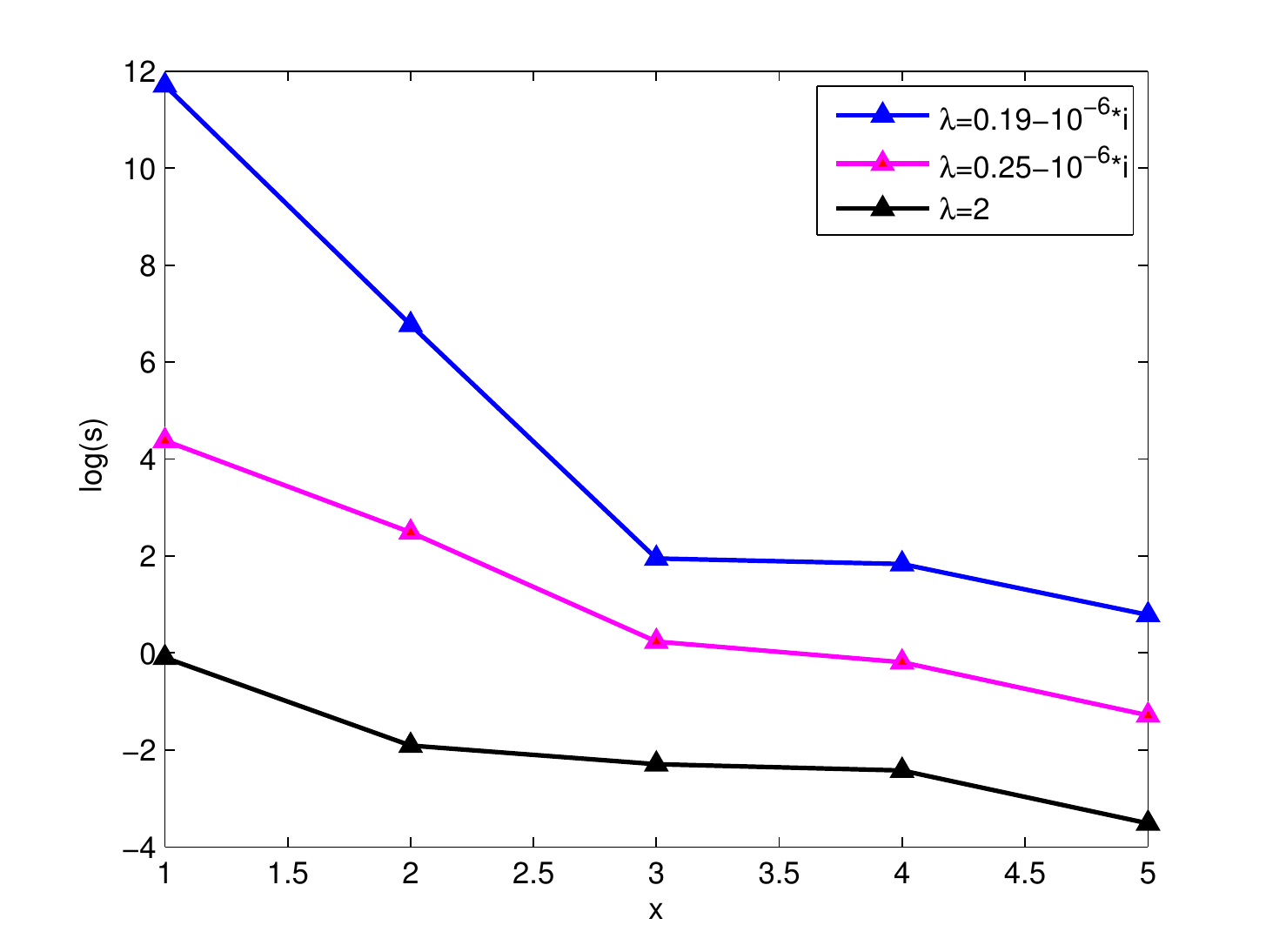}}
  \caption{(a) Reconstruction of the shape in Example \ref{exm3} with 1\% noise data and different $\lambda$ as well as the exact solution, under the same regularization parameter $\mu=0.05$, (b) the distribution of the singular values associated to different values of $\lambda$.  }
  \label{examp3-001}
\end{figure}
\begin{figure}[h]
\centering
\subfigure[$\lambda=0.19-10^{-6}i$]{
    \label{fig:subfig:a}
  \includegraphics[width=2.5in, height=2.2in]{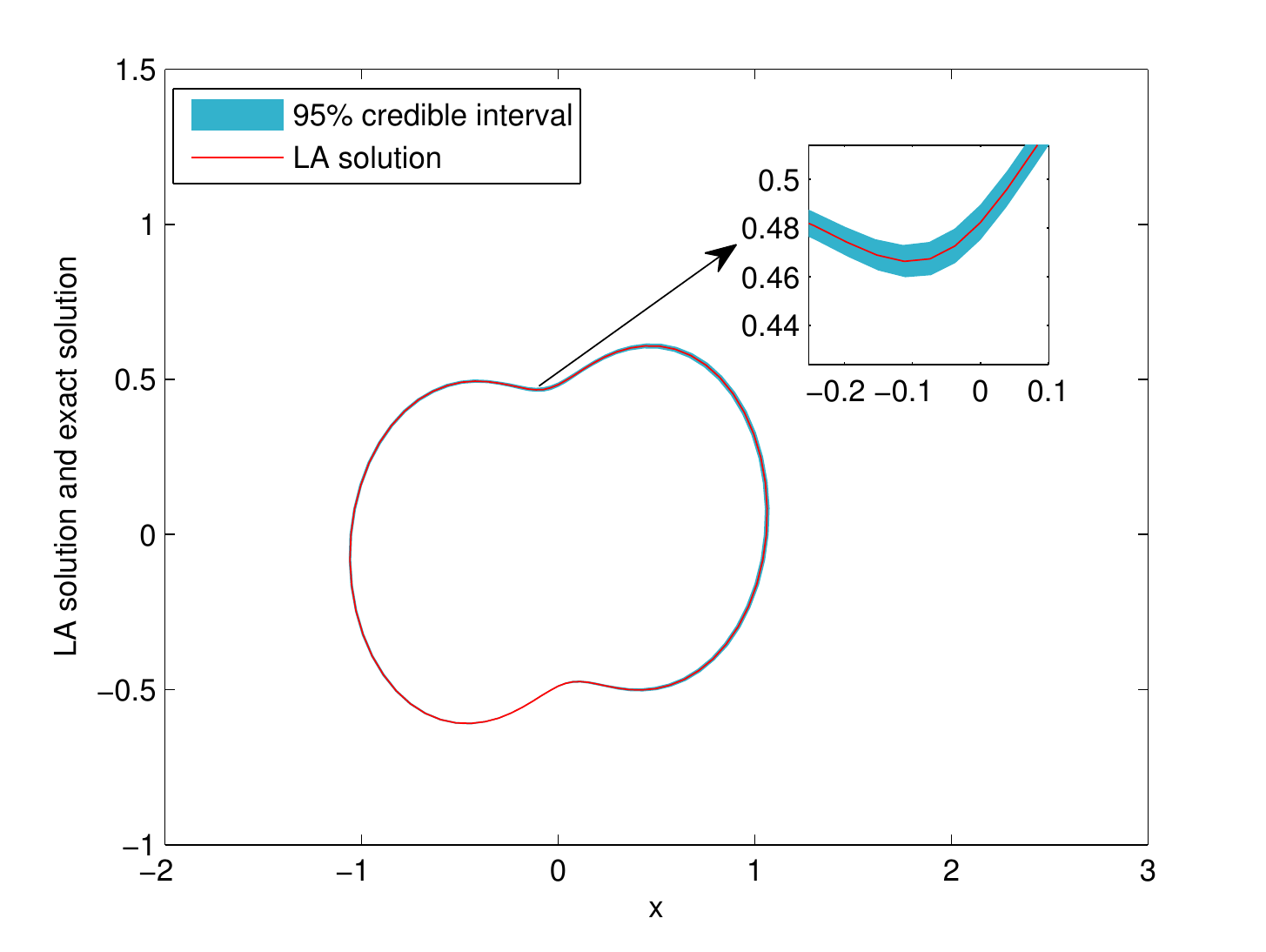}}
  \subfigure[$\lambda=2$]{
    \label{fig:subfig:b}
  \includegraphics[width=2.5in, height=2.2in]{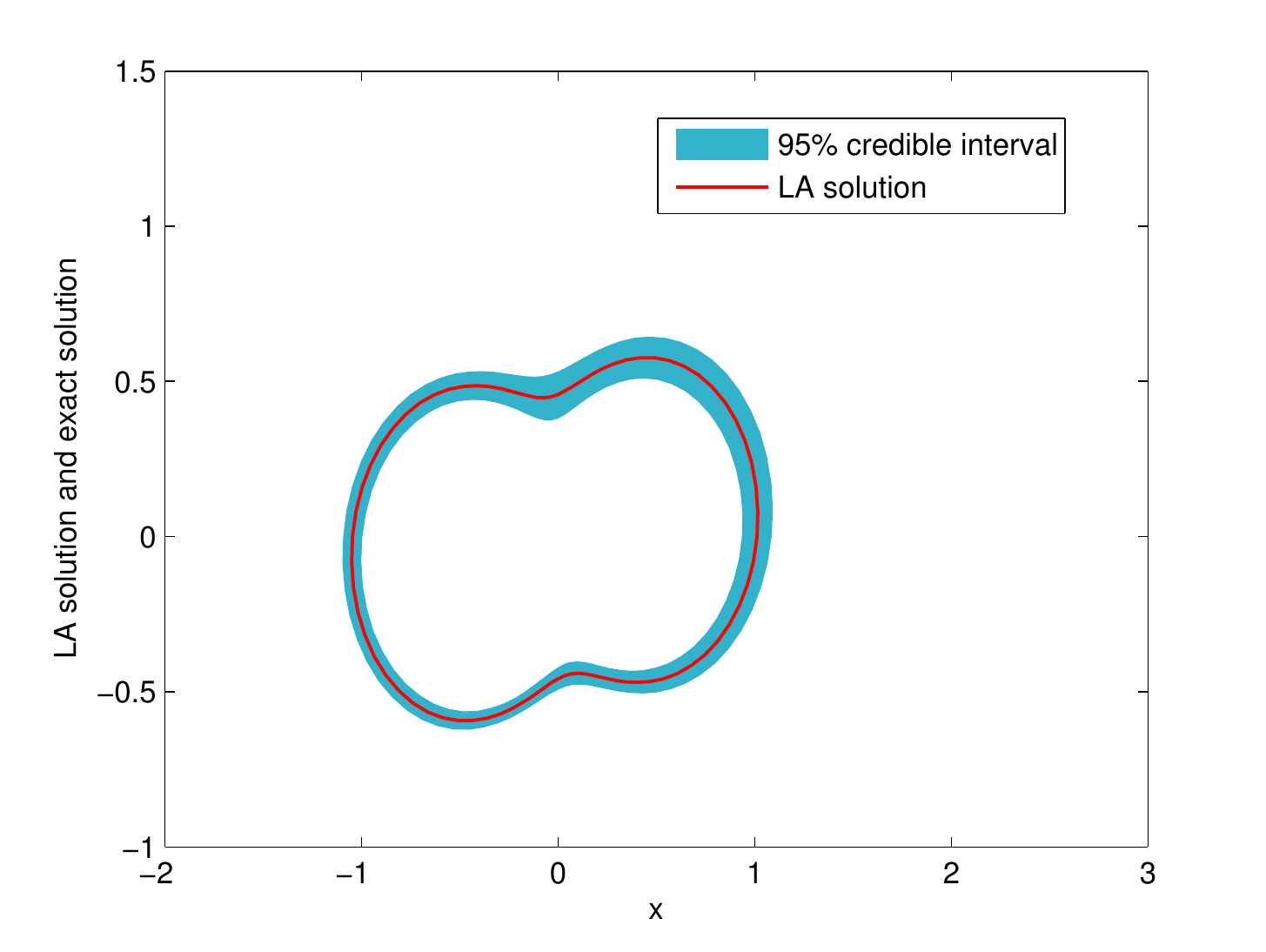}}
  \caption{The numerical results for Example \ref{exm3} with 1\% noise in the data and 95\% confidence interval for different $\lambda$.}
  \label{con-interval}
\end{figure}

Next, we study the variations of the confidence intervals with different $\mathrm{dist}(\lambda,\sigma(\mathcal{K}^*_D))$. The confidence interval can quantify the uncertain information of the solution. The numerical results for Example \ref{exm3} with different $\lambda$ are shown in Figure \ref{con-interval}, where the blue region represents the corresponding 95\% confidence region. The comparison with $\lambda=2$ indicates that the confidence region shrinks at $\lambda=0.19-10^{-6}i$ (the real part of $\lambda=0.19-10^{-6}i$ is the eigenvalue of $\mathcal{K}^*_D$ in Example \ref{exm3}). It can be clearly obtained that as $\mathrm{dist}(\lambda,\sigma(\mathcal{K}^*_D))$ shrinks, the accuracy of the inversion can be improved and the sensitivity to the random error can be reduced.

\begin{exm}\label{exm4}
In this example, we consider that the inclusion is pear, and the polar radius of the pear is parameterized by,
\[
q(t)=18/25+3/20\cos(3t),\ \ \ 0\leq t\leq2\pi.
\]
In the iteration, we choose a circle of radius 0.73 as the initial guess.
\end{exm}

In Example \ref{exm4}, we consider reconstructing a more challenging pear-shaped inclusion and we can actually reach similar conclusions to previous three Examples. At the same noise level, $\tilde{q}_{N_e}$ coincides well with the exact solution when $\lambda=0.14-10^{-6}i$ (minimum distance of $\mathrm{dist}(\lambda,\sigma(\mathcal{K}^*_D))$, and a steady, fast convergence of the relative error $e_\gamma$ in the iteration algorithm is shown in Figure \ref{examp4-001}. Moreover, the relative error becomes more oscillating as the noise level $\delta$ increases; see Table \ref{tab4} for $e_\gamma$ associated with different $\lambda$ and $\delta$.
\begin{figure}
\centering
\subfigure[]{
    \label{fig:subfig:a}
  \includegraphics[width=2.5in, height=2.2in]{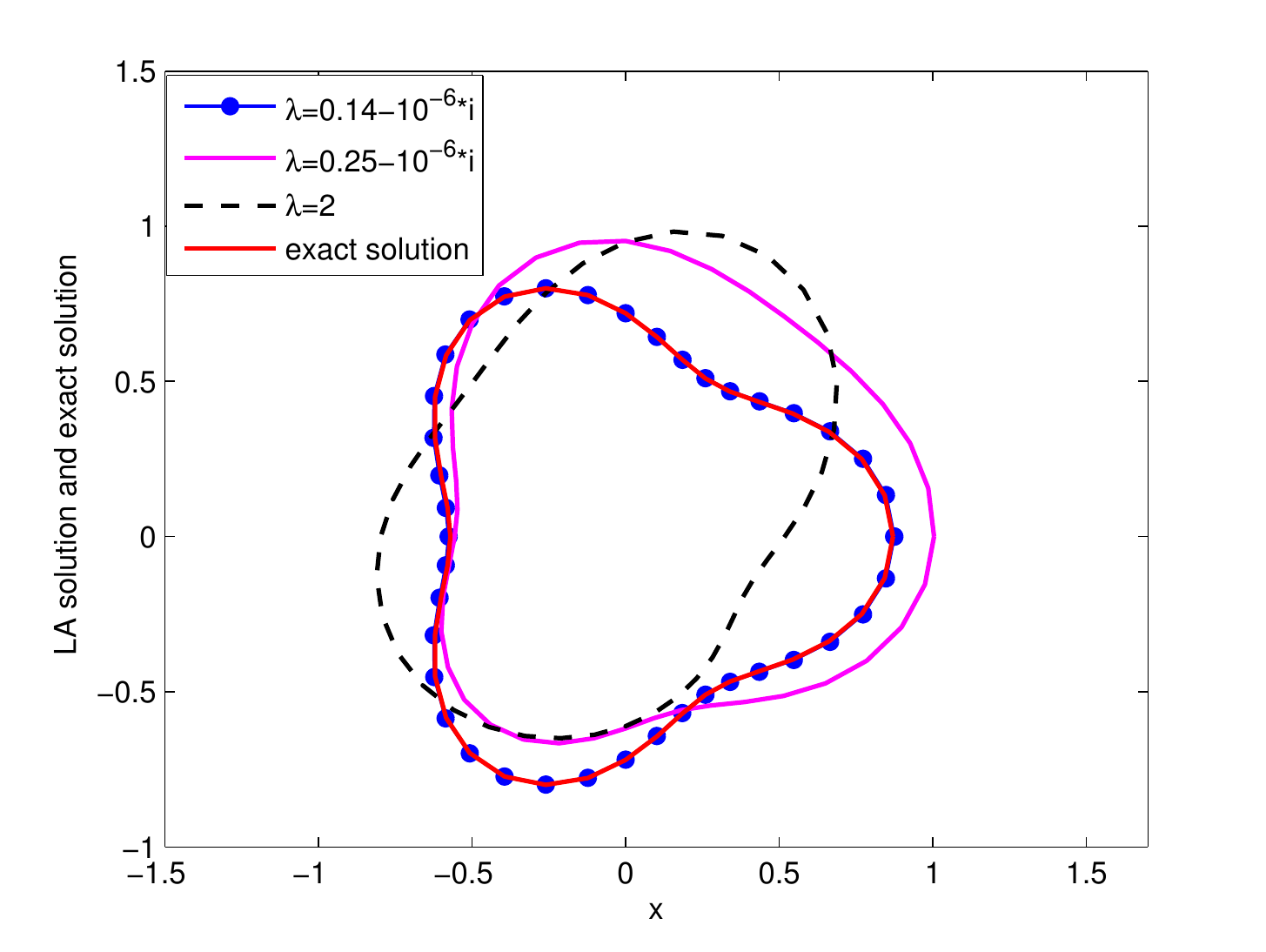}}
  \subfigure[]{
    \label{fig:subfig:b}
  \includegraphics[width=2.5in, height=2.2in]{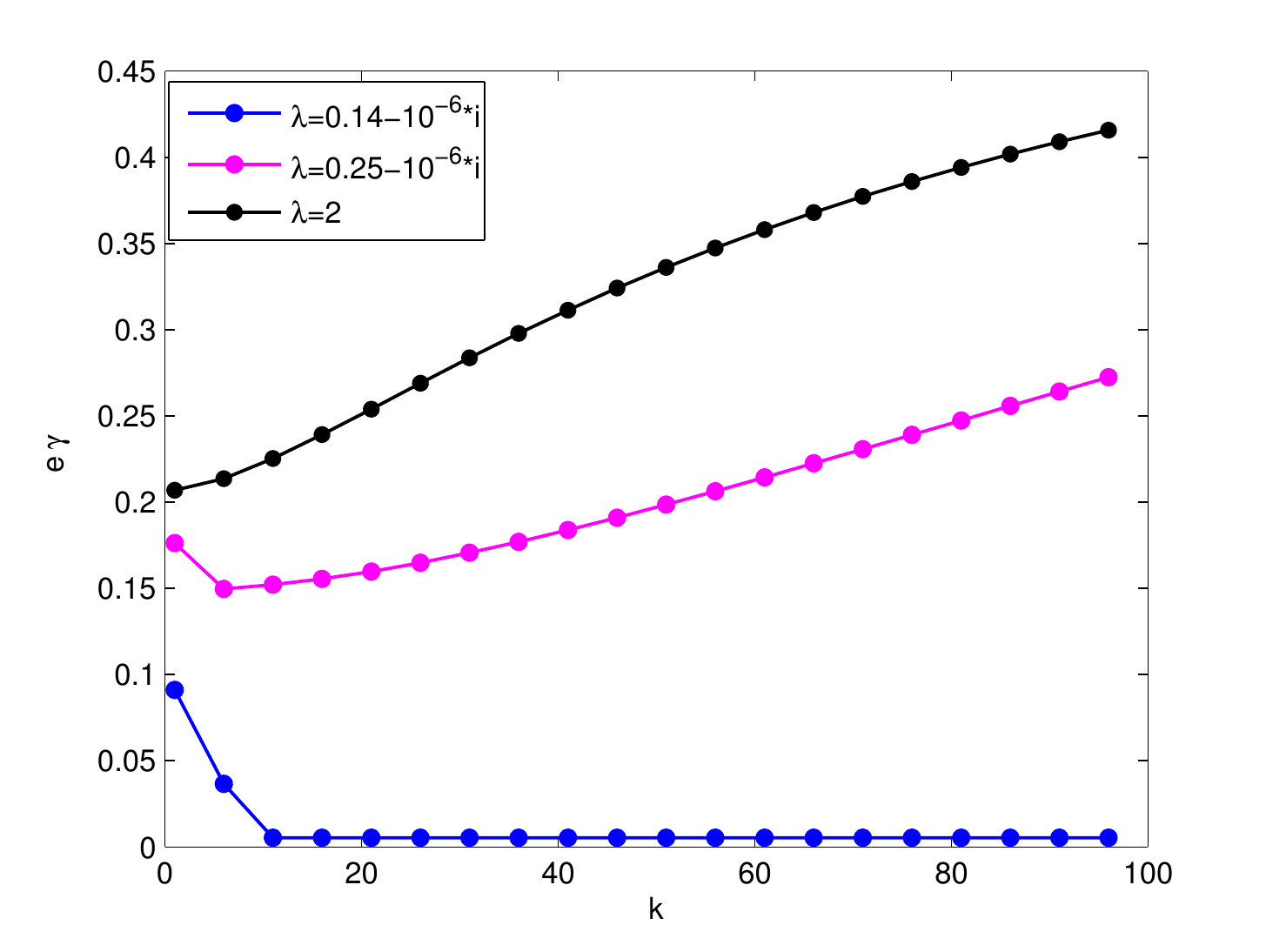}}
  \caption{Reconstruction the shape  for Example \ref{exm4} with 1\% noise data and regularization parameter $\mu=0.5$. (a) Reconstruction with different $\lambda$ and exact solution, (b) the relative error $e_\gamma$ versus iterations  steps $k$, for various values of $\lambda$. }
  \label{examp4-001}
\end{figure}

\begin{table}
\centering
\caption{ Numerical results of Example \ref{exm4} for $e_\gamma$ with various $\lambda$ and noise level $\delta$ }
\begin{tabular}{c|c|c}
\Xhline{1pt}
 $\lambda $ &$ \delta=0.01$ &$\delta=0.05$\\
  \hline
  $0.14-10^{-6}i$  &$0.0054$ &$0.0625$  \\
 \hline
 $0.25-10^{-6}i$ &$0.279$&$0.4850$\\
  \hline
 $2$     &$0.4206$&$1.009$ \\
 \hline
\Xhline{1pt}
\end{tabular}
\label{tab4}
\end{table}

\begin{figure}
  \centering
  \includegraphics[width=3.3in, height=2.5in]{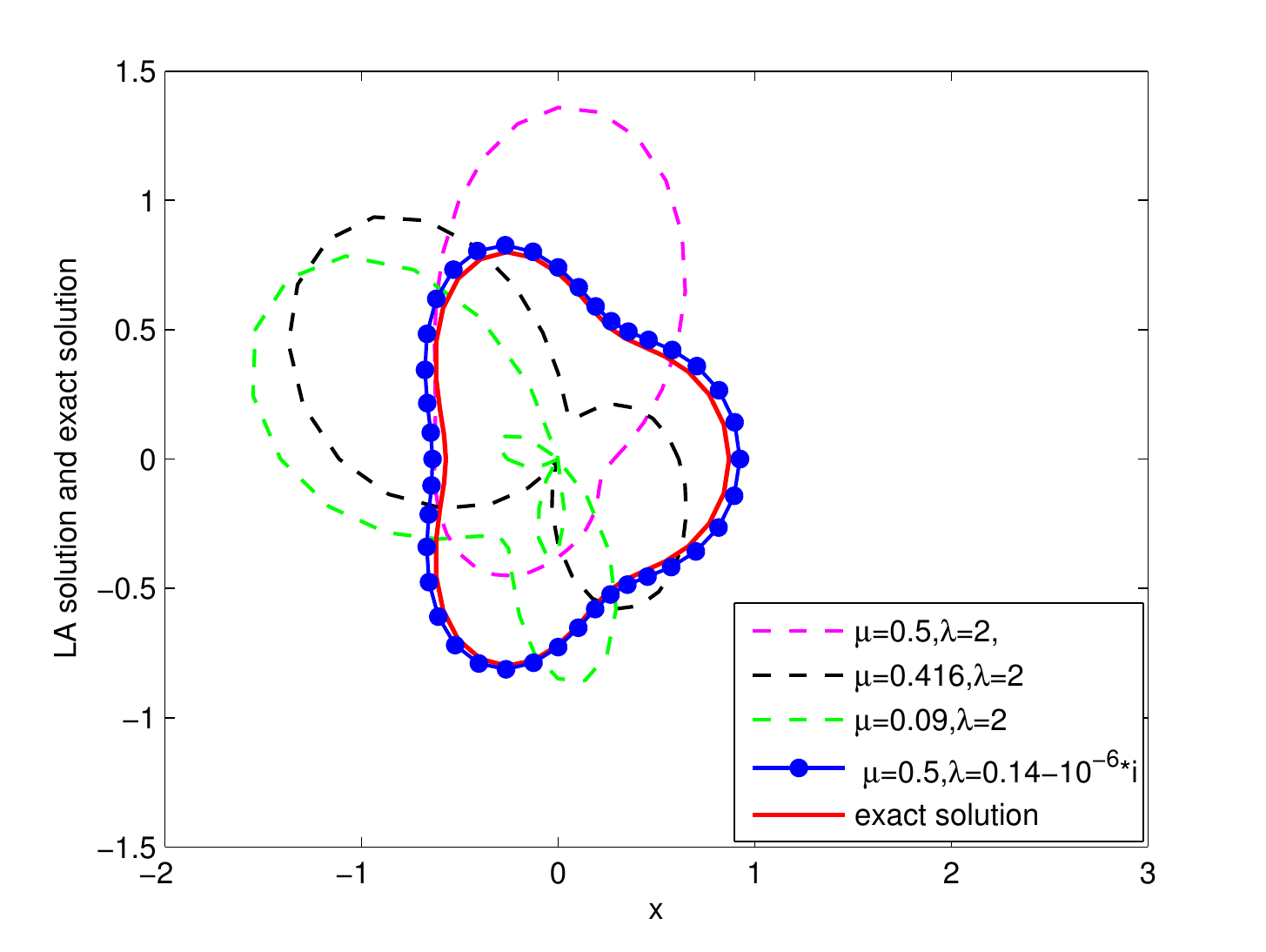}\\
  \caption{Reconstruction the shape  for Example \ref{exm4} with 5\% noise data with various regularization parameter $\mu$.}\label{example4}
\end{figure}


\begin{table}
\small
\centering
\caption{The relative errors for Example \ref{exm4} with different regularization parameters $\mu$ and $\lambda$, under same noise level $\delta=0.05$.}
\vspace{2pt}
\begin{tabular}{|c|c|c|c|c|c|c|c|c|c|c|c|}
\hline
  \multicolumn{1}{|c|}{} &\multicolumn{10}{|c|}{$\lambda=2$}    & \multicolumn{1}{|c|}{$\lambda=0.14-10^{-6}i$}   \\ \cline{1-12}
    $\mu$&$0.1$ &$0.8$ &$ 0.09$ &$0.73$ &$ 0.83$ &$0.206$ &$0.212$ &$0.320$ &$0.416$ &$0.668$ &$0.5$\\
  \hline
  $e_\gamma$&$0.881$ &$0.720$  &$1.278$ &$0.548$  &$1.096$ &$1.297$ &$1.324$ &$1.034$ &$0.848$ &$0.668$ &$0.0625$ \\
  \hline
\end{tabular}
\label{tab4-nois}
\end{table}
The choice of the regularization parameters is very important in the algorithmic calculation. In Example \ref{exm4}, we fixed the regularization parameter $\mu=0.5$ associated to different values of $\lambda$. In order to eliminate the influence of improper regularization parameters on the inversion results, the regularization parameters can be considered as a random variable with uniform distribution, i.e. $\mu\sim U(0,1)$. In fact, there are many other selection strategies of regularization parameters, such as Morozov¡¯s discrepancy principle, The L-curve method and so on (see \cite{Vogel2002}). But we only concern the comparative effectiveness of the plasmon resonance case and the non-resonance case, and then above simple regularization parameters selection is used. The more advanced selection strategies will be considered in our future works. In Figure \ref{example4}, when $\lambda=2$, we obtain four groups of random numbers from uniform distribution as the regularization parameters, while the corresponding reconstruction results remain disappointing. Table \ref{tab4-nois} lists 10 randomly generated regularization parameters with the relative error also exceeding $50\%$.

\section{Conclusions}
We investigate the inverse problem that utilizing the {far-field} measurement to reconstruct the shape of of an inclusion. The plasmon resonance is proposed to enhance the sensitivity of the reconstruction as well as to reduce the ill-posedness of the inverse problem. In fact, we derive the representation formula of shape sensitivity functional by using the asymptotic expansion method. Based on the asymptotic expansion of the eigenvalues and eigenfunctions of the Neumann-Poincar\'e operator, we further derive the delicate spectral representation of the shape sensitivity functional, which indicates that the sensitivity is improved greatly as the plasmon resonance occurs. Moreover, we combine the Tikhonov regularization method with the Laplace approximation to solve the inverse problem. This hybrid method not only calculates the minimizer accurately and quickly, but also captures the statistical information of the solution.  Finally, extensive numerical experiments confirm our theoretical analysis and illustrate the promising and salient features of the proposed reconstruction scheme.

\section*{Acknowledgments}
The work of M Ding and G Zheng were supported by the NSF of China (11301168) and NSF of Hunan (2020JJ4166). The work of H Liu was supported by the Hong Kong RGC General Research Fund (projects 12302919, 12301218 and 11300821) and the NSFC/RGC Joint Research Grant (project N\_CityU101/21).

\label{ssec:Conclusions}

\end{document}